\newcommand{\norm}[1]{{\left\Vert #1\right\Vert}}
\newcommand{\abs}[1]{{\left\vert #1\right\vert}}
\newcommand{\e}{\varepsilon}
\newcommand{\R}{{\mathbb R}}
\newcommand{\one}{{\mathds{1}}}
\DeclareMathOperator{\dist}{dist}
\DeclareMathOperator{\diam}{diam}
\DeclareMathOperator{\Lip}{Lip}
\newtheorem*{prop*}{Proposition}
\title{On the regularity of weak solutions to Burgers' equation with finite entropy production}
\date{}
\author{Xavier Lamy 
\thanks{Institut de Math\'ematiques de Toulouse, Universit\'e Paul Sabatier, Toulouse, France. Part of this work was conducted while XL was a postdoctoral researcher at the Max Planck Institute for Mathematics in the Sciences, Leipzig, Germany. Email: xlamy@math.univ-toulouse.fr}
 \and Felix Otto
\thanks{Max Planck Institute for Mathematics in the Sciences, Leipzig, Germany. Email: otto@mis.mpg.de} 
 }
\newtheorem{thm}{Theorem}
\newtheorem{prop}[thm]{Proposition}
\newtheorem{lem}[thm]{Lemma}
\newtheorem{cor}[thm]{Corollary}
\theoremstyle{definition}
\newtheorem{rem}[thm]{Remark}
\begin{document}
\maketitle

\abstract{Bounded weak solutions of Burgers' equation $\partial_tu+\partial_x(u^2/2)=0$ that are not entropy solutions need in general not be $BV$. Nevertheless it is known that solutions with finite entropy productions have a $BV$-like structure: a rectifiable jump set of dimension one can be identified, outside which $u$ has vanishing mean oscillation at all points. But it is not known whether all points outside this jump set are Lebesgue points, as they would be for $BV$ solutions. In the present article we show that the set of non-Lebesgue points of $u$ has Hausdorff dimension at most one. In contrast with the aforementioned structure result, we need only one particular entropy production to be a finite Radon measure, namely $\mu=\partial_t (u^2/2)+\partial_x(u^3/3)$. We prove H\"older regularity at points where $\mu$ has finite $(1+\alpha)$-dimensional upper density for some $\alpha>0$. The proof is inspired by a result of De Lellis, Westdickenberg and the second author : if $\mu_+$ has vanishing 1-dimensional upper density, then $u$ is an entropy solution. We obtain a quantitative version of this statement: if $\mu_+$ is small then $u$ is close in $L^1$ to an entropy solution.}

\section{Introduction}\label{s:intro}

It is well-known that weak solutions of Burgers' equation
\begin{equation}\label{eq:burgers}
\partial_t u+\partial_x \frac{u^2}{2}=0,
\end{equation}
(and more generally scalar conservation laws)
are not uniquely determined by initial data, and this is the reason why the notion of entropy solution was introduced \cite{kruzkov70}.
Entropy solutions are characterized by their nonpositive entropy production : for any convex entropy $\eta\colon\R\to\R$ and associated entropy flux $q(u)=\int^u v\eta'(v)dv$, the corresponding entropy production $\mu_\eta$ satisfies
\begin{equation*}
\mu_\eta = \partial_t \eta(u) +\partial_x q(u) \leq 0.
\end{equation*}
This constraint  ensures well-posedness of the Cauchy problem for \eqref{eq:burgers} with $L^\infty$ initial data.
Entropy solutions can be equivalently characterized by Oleinik's estimate
$\partial_x u \leq 1/t$ \cite{oleinik57},
and in particular they are locally in $BV$.

Although entropy solutions are the physically relevant solutions, general weak solutions sometimes need to be considered. 
{
For instance in \cite{varadhan04,mariani10,bellettinietal10} large deviation principles for stochastic approximation of entropy solutions are
related to variational principles for energy functionals of the form
\begin{equation*}
F_\e(u)=\int \abs{\frac{1}{\e}\abs{\partial_x}^{-1}\left(\partial_t u +\partial_x \frac{u^2}{2}\right) -\e \partial_x u}^2.
\end{equation*}
The $\Gamma$-limit of such functional is defined for weak solutions of \eqref{eq:burgers} that need not be entropy solutions, but have finite entropy production:}
\begin{equation}\label{eq:finiteentropy}
\mu_\eta = \partial_t \eta(u) +\partial_x q(u)\quad\text{ is a locally finite Radon measure,}
\end{equation}
for any $\eta\in C^2(\R)$ and associated flux $q$.
An important feature of such solutions is that they enjoy a kinetic formulation (see e.g. \cite{DOW03}), namely there exists $m(t,x,v)$ a locally finite Radon measure such that
\begin{equation*}
\partial_t\chi +v\partial_x\chi = \partial_v m,\quad\chi(t,x,v)=\one_{0<v\leq u(t,x)} - \one_{u(t,x)\leq v < 0}.
\end{equation*}
The measure $m$ encodes the entropy production through the formula
\begin{equation*}
\langle \mu_\eta,\varphi\rangle  = \int \eta''(v) \varphi(t,x) m(dt,dx,dv).
\end{equation*}
For entropy solutions it is nonpositive and the kinetic formulation was introduced in \cite{lions94}.

Another motivation for studying general weak solutions of \eqref{eq:burgers} comes from a formal analogy with solutions of the eikonal equation
\begin{equation}\label{eq:eikonal}
\abs{\nabla \varphi}=1,
\end{equation}
that need not be viscosity solutions. Such solutions arise for instance in the problem of $\Gamma$-convergence of the Aviles-Giga functional
\begin{equation*}
E_\e(\varphi)=\frac{\e}{2}\int\abs{\nabla^2\varphi}^2 +\frac{1}{2\e}\int\left(\abs{\nabla\varphi}^2-1\right)^2.
\end{equation*}
They can be endowed with a relevant concept of entropy production \cite{jinkohn00,ADM99,DMKO01,ignatmerlet12} and a kinetic formulation \cite{jabinperthame01,JOP02}. The $\Gamma$-limit of $E_\e$ is conjectured to be the total entropy production, but a proof of the upper bound is still missing because not enough is known about the regularity of solutions with finite entropy production (see \cite{contidelellis07,poliakovsky10} when $\nabla\varphi\in BV$). 
The analogy between \eqref{eq:burgers} and \eqref{eq:eikonal} has already proven fruitful. For instance, techniques developed in \cite{delellisotto03} to understand the fine structure of solutions of \eqref{eq:eikonal} were adapted in \cite{DOW03} to the context of scalar conservation laws. See also \cite{jabinperthame02,delellisignat15} for other regularity properties shared by both equations.

Unlike entropy solutions, weak solutions of \eqref{eq:burgers} with finite entropy production \eqref{eq:finiteentropy} may not be in $BV$. They are in $B^{1/3}_{3,\infty}$ \cite{golseperthame13}, but this is the best regularity one could hope for \cite{delelliswestdickenberg03}. However it is shown in \cite{lecumberry,DOW03} (related results can be found e.g. in \cite{ambrosioetal02,riviere02}) that they do enjoy a $BV$-like structure, namely: there exists an $\mathcal H^1$-rectifiable set $\mathcal J\subset \Omega$ such that $u$ has strong one-sided traces on $\mathcal J$, and vanishing mean oscillation at all points outside $\mathcal J$. Moreover the entropy production restricted to the \enquote{jump set} $\mathcal J$ can be computed with the chain rule: if $\nu$ denotes a normal vector along $\mathcal J$ and $u^{\pm}$ the corresponding one-sided traces of $u$, then
\begin{equation*}
\mu_\eta \lfloor \mathcal J = \left[ (\eta(u^+)-\eta(u^-))\nu_t + (q(u^+)-q(u^-))\nu_x\right]\mathcal H^1\lfloor \mathcal J.
\end{equation*}
The similarity with the structure of $BV$ solutions is not perfect, and the two following questions are left open: 
\begin{itemize}
\item Is $\mu_\eta$ supported on $\mathcal J$ ?
\item Is every point outside $\mathcal J$ a Lebesgue point of $u$ ?
\end{itemize}
In the present article we investigate the second question. Note that for entropy
solutions of a large class of one-dimensional scalar conservation laws the corresponding questions have been answered positively \cite{delellisriviere03}.

The quadratic entropy $\eta(u)=u^2/2$ plays a special role in our analysis. In fact our methods are strongly inspired by \cite{DOW04} where the importance of that particular entropy is shed light upon. We consider bounded weak solutions $u(t,x)$ of \eqref{eq:burgers} in a domain $\Omega\subset\R_t\times \R_x$ and denote simply by $\mu$ the corresponding entropy production
\begin{equation}\label{eq:mu}
\mu=\partial_t \frac{u^2}{2} +\partial_x \frac{u^3}{3}\;\in\mathcal M(\Omega).
\end{equation}
In \cite{DOW03} the singular set $\mathcal J$ is defined as the set of points with positive upper $\mathcal H^1$ density with respect to the measure $\nu\in\mathcal M(\Omega)$ given by
\begin{equation}\label{eq:nu}
\nu(A)=\abs{m}(A\times\R)=\sup_{\abs{\eta''}\leq 1}\abs{\mu_\eta}(A)\qquad\text{for }A\subset\Omega.
\end{equation}
In other words, denoting by $Q_r(z)$ the square of size $r$ centered at $z$, i.e.
\begin{equation*}
Q_r(z)=(t-r,t+r)\times (x-r,x+r)\quad\text{if }z=(t,x),
\end{equation*}
 the \enquote{regular points} of \cite{DOW03} are those belonging to
\begin{equation*}
\mathcal J^c =\left\lbrace z\in\Omega\colon \lim_{r\to 0} r^{-1}\nu( Q_r(z))=0\right\rbrace\subset \left\lbrace z\in\Omega\colon \lim_{r\to 0} r^{-1}\abs{\mu}( Q_r(z))=0\right\rbrace.
\end{equation*}
The last inclusion follows from $\abs{\mu}\leq\nu$, and it is not clear weather it is strict or not.
\begin{rem}
For $BV$ solutions of \eqref{eq:burgers} the measures $\mu$, $m$ and $\nu$ can be computed explicitly using the chain rule (see e.g. \cite[Remark~2.7]{bellettinietal10}) and one can check that $\nu=\abs{\mu}$ so that the inclusion is not strict.
\end{rem}
In the present paper we need a geometric rate of decay for $r^{-1}\abs{\mu}( Q_r(z))$ -- but no bound on $r^{-1}\nu( Q_r(z))$ -- to conclude that $z$ is a Lebesgue point: our {regular points} are given by
\begin{equation*}
\widetilde{\mathcal J}^c =\left\lbrace z\in\Omega\colon r^{-1}\abs{\mu}( Q_r(z))=\mathcal O(r^{\alpha})\text{ for some }\alpha>0\right\rbrace.
\end{equation*}
In order to quantify the regularity we obtain outside of $\widetilde{\mathcal J}$ we define, for any $\alpha,K>0$, 
\begin{equation}\label{eq:omegaalphaK}
\Omega_{\alpha,K}=\left\lbrace z\in\Omega\colon\abs{\mu}( Q_r(z))\leq K r^{1+\alpha},\;\forall r\in (0,d(z)\wedge 1) \right\rbrace,
\end{equation}
where $d(z)=\dist_\infty(z,\Omega^c)$ denotes the distance of $z$ to the boundary of $\Omega$ with respect to the $\ell^\infty$ norm.
In particular
\begin{equation*}
\widetilde{\mathcal J}=\bigcap_{\alpha,K>0}(\Omega_{\alpha,K})^c
\end{equation*}
has Hausdorff dimension at most one
 since 
{
$\mathcal H^{1+\alpha}((\Omega_{\alpha,K})^c)\lesssim K^{-1}\abs{\mu}(\Omega)$, as follows from a covering argument (see e.g. \cite[Theorem~2.56]{ambrosiofuscopallara}). In} $\Omega_{\alpha,K}$ the function $u$ is H\"older continuous:
\begin{thm}\label{thm:lebquantit}
For any bounded weak solution $u$ of \eqref{eq:burgers} with finite entropy production \eqref{eq:mu}, any $\alpha,K>0$ and $z\in \Omega_{\alpha,K}$ it holds
\begin{equation*}
\fint_{Q_r(z)}\abs{u-\fint_{Q_r(z)} u}\leq C r^{\frac{\alpha}{256}}, \qquad\text{for all } r\in (0,d(z)),
\end{equation*}
where $C>0$ depends on
$\norm{u}_{L^\infty}$, $K$, $\alpha$ and $d(z)$.
\end{thm}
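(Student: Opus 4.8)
The plan is: reduce by scaling to an oscillation–decay statement on the unit square, prove a one–step improvement whose engine is a \emph{quantitative} form of the theorem of De~Lellis, Otto and Westdickenberg \cite{DOW04} recalled in the abstract, and iterate.

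\textbf{Step 1 (rescaling).} For $z\in\Omega_{\alpha,K}$ and $\rho\in(0,d(z))$ put $u_\rho(y):=u(z+\rho y)$, $y\in Q_1$. Then $u_\rho$ is a bounded weak solution of \eqref{eq:burgers} on $Q_1$ with $\norm{u_\rho}_{L^\infty}=\norm{u}_{L^\infty}=:M$, and a change of variables in the distributional form of \eqref{eq:mu} gives
\begin{equation*}
\abs{\mu_{u_\rho}}(Q_s(0))=\rho^{-1}\abs{\mu}(Q_{\rho s}(z))\le K\rho^\alpha\,s^{1+\alpha}\qquad(0<s\le 1);
\end{equation*}
in particular the entropy production of $u_\rho$ has total mass at most $\e_\rho:=K\rho^\alpha$ on $Q_1$ -- small for small $\rho$ -- and still decays like $s^{1+\alpha}$ on the concentric squares $Q_s(0)$. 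Setting $\omega(\rho):=\inf_{a\in\R}\fint_{Q_\rho(z)}\abs{u-a}$ (which agrees with the left–hand side of Theorem~\ref{thm:lebquantit} up to a factor $2$), the claim becomes $\omega(\rho)\le C\rho^{\alpha/256}$.

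\textbf{Step 2 (quantitative entropy approximation).} The heart of the proof is the following effective version of \cite{DOW04}: there are a universal $\beta_0>0$ and $C=C(M)$ such that any bounded weak solution $v$ of \eqref{eq:burgers} on $Q_1$ with $\norm{v}_{L^\infty}\le M$ and $\mu_{v,+}(Q_1)\le\e$ admits an entropy solution $\bar v$ of \eqref{eq:burgers} on $Q_{3/4}$, with $\norm{\bar v}_{L^\infty}\le M$, satisfying $\fint_{Q_{3/4}}\abs{v-\bar v}\le C\e^{\beta_0}$. I would prove this by revisiting the argument of \cite{DOW04} -- which rests on the kinetic formulation $\partial_t\chi+v\partial_x\chi=\partial_v m$ and Panov's remark that for a convex flux a single strictly convex entropy already forces all the Kru\v{z}kov inequalities -- and turning its compactness steps into quantitative ones: a quantitative averaging/commutator bound for the kinetic equation that converts smallness of $m_+$ into smallness of a controllable defect, together with Kru\v{z}kov's $L^1$–contraction, which is already quantitative and supplies $\bar v$ with a polynomial rate. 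Making this interact cleanly with the borderline $B^{1/3}_{3,\infty}$–regularity of such solutions \cite{golseperthame13} is where most of the work -- and the loss in the exponent -- occurs; I expect this step to be the main obstacle.

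\textbf{Step 3 (one–step decay).} Apply Step~2 at scale $\rho$ to $v=u_\rho$: there is an entropy solution $\bar v$ on $Q_{3/4}$ with $\fint_{Q_{3/4}}\abs{u_\rho-\bar v}\le C(M)\e_\rho^{\beta_0}$. Now exploit that entropy solutions are tame: by Oleinik's estimate \cite{oleinik57} $\bar v\in BV(Q_{1/2})$ with variation $\lesssim_M 1$; by the structure theory \cite{delellisriviere03} $\bar v$ has a rectifiable jump set off which it is, with a controlled modulus, a point of vanishing oscillation; and a \emph{continuous} entropy solution of \eqref{eq:burgers} has interior gradient bounds (Oleinik forwards and backwards in time), so the oscillation of $\bar v$ at $0$ contracts along dyadic scales unless a jump passes through a neighbourhood of $0$. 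But a jump of strength $\delta$ carries quadratic entropy production of $\mathcal H^1$–density $\sim\delta^3$, while $\bar v$ is $L^1$–close to $u_\rho$ (so $\bar\mu$ tested against any fixed $\varphi$ differs from $\mu_{u_\rho}$ tested against $\varphi$ by at most $C\norm{\varphi}_{C^1}\e_\rho^{\beta_0}$) and $\abs{\mu_{u_\rho}}$ is small on every $Q_s(0)$; hence the strength of such a jump is at most a fixed power of $\e_\rho$. Combining the contraction with this jump bound and with the $L^1$–error $C(M)\e_\rho^{\beta_0}$ yields, for a suitably chosen $\theta=\theta(M,\alpha)\in(0,1)$,
\begin{equation*}
\omega(\theta\rho)\ \le\ \tfrac12\,\omega(\rho)+C(M,K,\alpha)\,\rho^{\,\alpha/256}\qquad(0<\rho\le d(z)).
\end{equation*}
The exponent $\alpha/256$ is what remains after the cascade of losses (entropy mass to $L^1$–distance in Step~2, then to oscillation here, with cube roots entering through the jump estimate); pinning down the precise constant $256$ is a matter of careful but routine bookkeeping, and some care is also needed to turn the heuristics above into a genuine contraction (rather than a mere absolute bound) and to control the interior $L^\infty$–oscillation of $\bar v$ by its $L^1$–oscillation via the comparison principle.

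\textbf{Step 4 (iteration).} Choosing $\theta$ with $\theta^{\alpha/256}=3/4$ and iterating the displayed inequality along $\rho=\theta^k d(z)$, the resulting series is geometric and sums (after passing to intermediate scales at the cost of a constant, using $\omega(\rho')\le(\rho/\rho')^2\omega(\rho)$) to $\omega(\rho)\le C(M,K,\alpha,d(z))\,\rho^{\alpha/256}$ for all $0<\rho\le d(z)$; for $\rho$ comparable to $d(z)$ the inequality is in any case trivial from $\omega\le 2M$ after enlarging $C$. This is Theorem~\ref{thm:lebquantit}.
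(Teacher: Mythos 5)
There is a genuine gap, in two places. First, your Step~2 is exactly the crux of the whole theorem (it is the paper's Theorem~\ref{thm:errorentropy}), and you do not prove it: you outline a plan via the kinetic formulation and averaging/commutator estimates and yourself flag it as ``the main obstacle''. Worse, the plan as sketched leans on the $B^{1/3}_{3,\infty}$ regularity of \cite{golseperthame13}, which requires the full entropy-production measure $\nu$ of \eqref{eq:nu} to be finite; under the hypothesis of the theorem only the single quadratic entropy production $\mu$ of \eqref{eq:mu} is assumed to be a measure, and the paper's Remark~\ref{rem:errorentropynu} points out precisely that this regularity is then not available. The paper instead proves the quantitative stability at the Hamilton--Jacobi level: a sup-convolution regularization, a weak comparison/maximum principle against semiconvex subsolutions (Lemma~\ref{lem:supersol}) built on a quantified Tartar div-curl argument, giving $\sup\abs{h-\bar h}\lesssim \mu_+(Q_1)^{1/8}$ (Proposition~\ref{prop:errorvisc}), and then an upgrade to $L^1$ using a div-curl quantification of compactness that needs only $\abs{\mu}(Q_1)$ -- none of which appears in your outline.

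Second, the contraction mechanism in your Step~3 rests on a false statement: a continuous entropy solution of \eqref{eq:burgers} does \emph{not} have interior gradient bounds ``forwards and backwards in time''. Oleinik's estimate is one-sided ($\partial_x\zeta\leq 1/t$) and only forward; a continuous entropy solution can steepen without bound before shock formation (e.g.\ monotone decreasing smooth data just before breaking), so oscillation of $\bar v$ at $0$ need not contract along dyadic scales even in the absence of jumps, and your one-step decay $\omega(\theta\rho)\leq\tfrac12\omega(\rho)+C\rho^{\alpha/256}$ is not justified. The paper circumvents this entirely and needs no Campanato iteration: it applies Theorem~\ref{thm:errorentropy} both to $u(t,x)$ and to $-u(-t,x)$, producing an entropy solution $\overline\zeta$ and an \emph{anti}-entropy solution $\underline\zeta$; the two one-sided Oleinik bounds $\partial_x\overline\zeta\lesssim1$ and $-\partial_x\underline\zeta\lesssim1$, combined with the $L^1$-closeness of $u$ to each, give a direct absolute bound $\lesssim\theta+\theta^{-3}\abs{\mu}(Q_1)^{1/64}$ on the spatial oscillation, which is then optimized in $\theta$ after rescaling (this is where $\alpha/256$ comes from), and finally the equation itself is used to transfer the spatial modulus to the time variable -- a step your proposal does not address separately. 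As it stands, both the key stability estimate and the decay step of your argument are unproved, and the latter would fail as formulated.
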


{
\begin{rem}
Such Campanato decay implies local H\"{o}lder continuity in $\Omega_{\alpha,K}$ in the classical sense: for any $d_0>0$ and $z_1,z_2\in \Omega_{\alpha,K}\cap \lbrace d\geq d_0\rbrace$ it holds
\begin{equation*}
\abs{u(z_1)-u(z_2)}\leq C \abs{z_1-z_2}^{\frac{\alpha}{256}},
\end{equation*}
where $C>0$ depends on $\norm{u}_{L^\infty}$, $K$, $\alpha$ and $d_0$. Also, note that the explicit dependence on $K$, $\alpha$ and $d_0$ can be infered from the proof in \S\ref{s:proofs}.
\end{rem}
}

\begin{cor}\label{cor:leb}
The set of non-Lebesgue points of any bounded weak solution $u$ of \eqref{eq:burgers} with finite entropy production \eqref{eq:mu} has Hausdorff dimension at most one.
\end{cor}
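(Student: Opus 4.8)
The strategy is to obtain Corollary~\ref{cor:leb} as a soft consequence of Theorem~\ref{thm:lebquantit}. Recall from the discussion preceding the theorem that, for each fixed $\alpha>0$, a standard covering argument gives $\mathcal H^{1+\alpha}\big((\Omega_{\alpha,K})^c\cap\Omega'\big)\lesssim K^{-1}\abs{\mu}(\Omega')$ on every $\Omega'\Subset\Omega$; letting $K\to\infty$ and then exhausting $\Omega$ by such $\Omega'$, one gets $\mathcal H^{1+\alpha}(\widetilde{\mathcal J})=0$ for all $\alpha>0$, hence $\dim_{\mathcal H}\widetilde{\mathcal J}\leq 1$. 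Since $\widetilde{\mathcal J}^c=\bigcup_{\alpha,K>0}\Omega_{\alpha,K}$, it therefore suffices to check that every point of every $\Omega_{\alpha,K}$ is a Lebesgue point of $u$.

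I would deduce this from the Campanato decay of Theorem~\ref{thm:lebquantit} by the classical telescoping argument. Fix $\alpha,K>0$, $z\in\Omega_{\alpha,K}$, write $\beta=\tfrac{\alpha}{256}$ and $a_r=\fint_{Q_r(z)}u$, so that $\fint_{Q_r(z)}\abs{u-a_r}\leq Cr^\beta$ for $r\in(0,d(z))$. Comparing two consecutive dyadic radii, $\abs{a_r-a_{r/2}}\leq 4\fint_{Q_r(z)}\abs{u-a_r}\leq 4Cr^\beta$, so $(a_{2^{-k}r_0})_k$ is Cauchy for any $r_0<d(z)$ and converges to some $\ell(z)$, with $\abs{a_r-\ell(z)}\lesssim r^\beta$ after summing the geometric series. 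Hence
\begin{equation*}
\fint_{Q_r(z)}\abs{u-\ell(z)}\leq\fint_{Q_r(z)}\abs{u-a_r}+\abs{a_r-\ell(z)}\lesssim r^\beta\to 0\quad\text{as }r\to0.
\end{equation*}
Because $B_r(z)\subset Q_r(z)\subset B_{\sqrt 2\,r}(z)$ with $\abs{Q_r(z)}/\abs{B_r(z)}$ a fixed ratio, the same holds with the squares $Q_r(z)$ replaced by balls $B_r(z)$, which is exactly the statement that $z$ is a Lebesgue point of $u$ (with Lebesgue value $\ell(z)$). This also recovers, uniformly in $z$, the pointwise H\"older continuity recorded in the Remark following Theorem~\ref{thm:lebquantit}.

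Combining the two steps, the set of non-Lebesgue points of $u$ is contained in $\widetilde{\mathcal J}$, which has Hausdorff dimension at most one. There is really no hard step here: the whole difficulty is concentrated in Theorem~\ref{thm:lebquantit}, and once that Campanato estimate is in hand the corollary is pure bookkeeping. The only two places calling for a modicum of care are the fact that $\mu$ is merely \emph{locally} finite, which forces one to argue on subdomains $\Omega'\Subset\Omega$ and take a countable exhaustion, and the passage between cubes $Q_r$ and balls $B_r$ in the definition of a Lebesgue point, which is harmless by the inclusions above.
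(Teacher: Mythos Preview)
Your argument is correct and is exactly the deduction the paper intends: the paper does not give a separate proof of Corollary~\ref{cor:leb}, treating it as an immediate consequence of Theorem~\ref{thm:lebquantit} together with the covering estimate $\mathcal H^{1+\alpha}((\Omega_{\alpha,K})^c)\lesssim K^{-1}\abs{\mu}(\Omega)$ stated just before the theorem. Your write-up simply spells out the standard Campanato telescoping step and the localization to $\Omega'\Subset\Omega$ needed because $\mu$ is only locally finite, both of which the paper leaves implicit.
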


The proof of Theorem~\ref{thm:lebquantit} relies on the following principle : if the positive part of the entropy production \eqref{eq:mu} is small, then $u$ should be close to an entropy solution. This principle is already present in \cite{DOW04} where it is shown that {if $\mu_+$ has vanishing upper $\mathcal H^1$-density, then $u$ must be an entropy solution. Here we obtain, using methods  inspired by \cite{DOW04}, a quantitative version of this result: (a small power of) the total mass of $\mu_+$ controls the $L^1$-distance of $u$ to entropy solutions.} This is the content of the next result, where we write $Q_r$ for $Q_r(0,0)$.

\begin{thm}\label{thm:errorentropy}
Let  $u\in L^\infty(Q_1)$ be a weak solution of \eqref{eq:burgers} with finite entropy production \eqref{eq:mu}. Then there exists an entropy solution $\zeta\in L^\infty(Q_1)$  of \eqref{eq:burgers} such that
\begin{equation*}
\int_{Q_{3/4}}\abs{u-\zeta}\leq C\left(1+\abs{\mu}(Q_1)\right)^{\frac 15}\mu_+(Q_1)^{\frac{1}{64}},
\end{equation*} 
where $C>0$ only depends on $\norm{u}_{L^\infty}$.
\end{thm}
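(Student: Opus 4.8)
\textbf{Proof proposal for Theorem~\ref{thm:errorentropy}.}

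The plan is to build the entropy solution $\zeta$ by solving Burgers' equation with an appropriately chosen initial datum on a horizontal slice, and then to compare $u$ with $\zeta$ using the quadratic-entropy structure emphasized in \cite{DOW04}. First I would fix a good time-slice $t_0$ close to the bottom of $Q_1$ (chosen by an averaging argument so that the trace $u(t_0,\cdot)$ exists in $L^1$ and controls the relevant quantities), and let $\zeta$ be the unique entropy solution of \eqref{eq:burgers} with initial datum $u(t_0,\cdot)$ on the corresponding $x$-interval, extended by the Oleinik bound so that $\|\zeta\|_{L^\infty}\le\|u\|_{L^\infty}$. The heart of the matter is then a \emph{stability estimate} quantifying how far $u$ is from being an entropy solution in terms of $\mu_+$. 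Following \cite{DOW04}, the special role of $\eta(u)=u^2/2$ is that the associated kinetic measure $m$ is, up to the sign convention, $\mathrm d m = \mathrm d\mu$ tested against $\eta''\equiv 1$; more precisely one wants to exploit that $u$ and $\zeta$ both solve the kinetic equation $\partial_t\chi + v\,\partial_x\chi = \partial_v m$ with kinetic measures $m$, $m_\zeta$, where $m_\zeta\le 0$ and $m$ has positive part controlled by $\mu_+$ (since $\langle\mu_\eta,\varphi\rangle=\int\eta''\varphi\,\mathrm d m$ and for the quadratic entropy this is exactly $\langle\mu,\varphi\rangle$, so the positive part of $m$ integrated in $v$ is dominated by $\mu_+$).

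Concretely, I would run the classical kinetic/Kru\v{z}kov doubling-of-variables contraction argument between $u$ and $\zeta$, but keeping track of the error terms produced by the sign-indefinite part of $m$. The standard computation gives, for the function $t\mapsto\int |u(t,x)-\zeta(t,x)|\,\mathrm d x$ (suitably localized with a spatial cutoff to handle the finite domain and the finite speed of propagation, using $\|u\|_{L^\infty}$ to control the cone of dependence), a Gronwall-type inequality whose inhomogeneity is $\int \chi_\zeta(t,x,v)\,\mathrm d m_+(t,x,v)$ plus a symmetric term, and these are bounded by $\mu_+(Q_1)$. Integrating in $t$ over the slab $Q_{3/4}$ then yields $\int_{Q_{3/4}}|u-\zeta|\lesssim \mu_+(Q_1) + (\text{error from the choice of }t_0)$. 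The choice-of-slice error is where the factor $(1+|\mu|(Q_1))^{1/5}$ and the fractional power $\mu_+(Q_1)^{1/64}$ enter: because one only gets to pick $t_0$ from a set of slices of good measure, and the trace of $u$ is merely controlled through the $B^{1/3}_{3,\infty}$-type bound / the finite-entropy-production structure rather than by $BV$, one pays interpolation losses. I expect the bookkeeping to proceed by: (i) a Chebyshev/averaging argument selecting $t_0$ so that $\int|u(t_0,x)-u(t,x)|\,\mathrm d x$ is small on average in $t$, with the smallness quantified by a power of $\mu_+$ via the kinetic equation (the time-derivative of $\int\chi\,\mathrm d v$ is $\partial_x(\text{something bounded}) + \partial_v m$, so differences across slices are controlled by $|\mu|$ horizontally and by $\mu_+$ in the entropy-dissipation direction); (ii) combining with the contraction estimate; (iii) optimizing the free parameters (the width of the localization, the slab thickness, the threshold in Chebyshev) to land on the stated exponents.

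The main obstacle, as in \cite{DOW04}, will be step (i): extracting a \emph{quantitative} control on the oscillation of $u$ in the time variable purely from $\mu=\partial_t(u^2/2)+\partial_x(u^3/3)$ being a finite measure with small positive part. Unlike for $BV$ solutions, one cannot simply use strong traces; instead one must run the De Lellis--Westdickenberg--Otto machinery (propagation of the kinetic relation, one-sided bounds, blow-up along slices) in a quantitative regime, which is precisely what forces the rather lossy exponent $1/64$. A secondary technical point is that the comparison argument must be localized in a way compatible with the finite domain $Q_1$ and with the fact that $\zeta$ is only defined from one slice, so the finite-speed-of-propagation geometry (a backward cone of slope $\|u\|_{L^\infty}$) has to fit inside $Q_1$ when starting from $t_0$ near the boundary; shrinking to $Q_{3/4}$ gives exactly the room needed. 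Once the two estimates are in hand, the final inequality follows by the parameter optimization, with $C$ depending only on $\|u\|_{L^\infty}$ through the speed of propagation and the a priori $L^\infty$ bound on $\zeta$.
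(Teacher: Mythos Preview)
Your proposal has a genuine gap at the level of the hypotheses. The theorem assumes only that the \emph{quadratic} entropy production $\mu=\partial_t(u^2/2)+\partial_x(u^3/3)$ is a finite Radon measure; it does \emph{not} assume that all entropy productions are measures, i.e.\ that the measure $\nu$ in \eqref{eq:nu} is finite. Without that, there is no kinetic formulation $\partial_t\chi+v\partial_x\chi=\partial_v m$ to invoke, so the doubling-of-variables/kinetic contraction argument you sketch does not get off the ground. Even if one grants the kinetic formulation, your key claim that ``the positive part of $m$ integrated in $v$ is dominated by $\mu_+$'' is the wrong inequality: from $\mu(A)=\int m(A\times dv)$ one gets $\mu_+(A)\le \int m_+(A\times dv)$, not the reverse, and the Kru\v{z}kov contraction error involves $m_+$ tested against $\chi_\zeta$, which is not controlled by $\mu_+$ alone. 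In short, the contraction step would require smallness of the positive parts of \emph{all} Kru\v{z}kov entropy productions, which is a strictly stronger input than what the theorem provides (cf.\ Remark~\ref{rem:errorentropynu}, where assuming $\nu$ finite is singled out as an additional hypothesis yielding a simpler proof and a better exponent).

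The paper's route is quite different and is tailored to use only $\mu$. It passes to the Hamilton--Jacobi level: writing $u=\partial_x h$, $-u^2/2=\partial_t h$, one constructs the \emph{viscosity} solution $\bar h$ of \eqref{eq:HJ} with the same parabolic boundary data and sets $\zeta=\partial_x\bar h$. The core estimate is Proposition~\ref{prop:errorvisc}, a weak comparison principle (Lemma~\ref{lem:supersol}) showing that small $\mu_+$ forces $h$ to behave like a viscosity supersolution up to an error, hence $\sup_{Q_{7/8}}|h-\bar h|\lesssim \mu_+(Q_1)^{1/8}$. This $L^\infty$ bound on $h-\bar h$ is then upgraded to $L^1$ control of $u-\zeta=\partial_x(h-\bar h)$ by a quantitative compactness/div-curl argument (which is where $|\mu|(Q_1)$ enters) and a mollification--interpolation step; the exponents $1/5$ and $1/64$ arise from optimizing the mollification scale, not from a time-slice selection. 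So the entropy solution $\zeta$ is not built from a good initial slice of $u$ at all, and the comparison is done at the level of potentials rather than via $L^1$ contraction.
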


To prove Theorem~\ref{thm:errorentropy}, the main step is to estimate the distance to entropy solutions in a rather weak sense, as explained below. This weak estimate can then be strengthened to an $L^1$ estimate by appropriately quantifying the compactness enforced by  \eqref{eq:mu}. 

{
\begin{rem}\label{rem:errorentropynu}
If the measure $\nu$ defined in \eqref{eq:nu}, that encodes all entropy productions,  is finite, then we have a $B^{1/3}_{3,\infty}$ estimate \cite{golseperthame13} so that the compactness is easily quantified. Such an assumption would simplify the proof of Theorem~\ref{thm:errorentropy} and improve the dependence on $\mu_+(Q_1)$ : we would obtain
\begin{equation*}
\int_{Q_{3/4}}\abs{u-\zeta}\leq C \mu_+(Q_1)^{\frac{1}{24}},
\end{equation*}
for some constant $C>0$ depending on $\norm{u}_{L^\infty}$ and $\nu(Q_1)$. Modifying the definition of the sets $\Omega_{\alpha,K}$ accordingly (i.e. incorporating the constraint $r^{-1}\nu(Q_r(z))\leq K$) this would also yield in Theorem~\ref{thm:lebquantit} an exponent $\alpha/96$ instead of $\alpha/256$.
\end{rem}
}

As in \cite{DOW04} we make use of the correspondence between Burgers' equation and the related Hamilton-Jacobi equation
\begin{equation}\label{eq:HJ}
\partial_t h+\frac 12(\partial_x h)^2 =0,
\end{equation}
obtained from observing that the vector field $(-u^2/2,u)$ is curl-free and therefore can be written as a gradient field $(\partial_t h,\partial_x h)$.
The aforementioned weak estimate consists in estimating
 the $L^\infty$-distance of $h$ to viscosity solutions of \eqref{eq:HJ}, which correspond to entropy solutions of \eqref{eq:burgers} \cite{DOW04}.
This is the very heart of our argument and we achieve this in \S\ref{s:HJ} by turning the following loose statement into a rigorous one: if the positive part of the entropy production is small, then $h$ is \enquote{not far} from being a viscosity supersolution of 
\begin{equation*}
\partial_t h +\frac 12(\partial_x h)^2\geq -\delta,\qquad\text{{for \enquote{small} }}\delta.
\end{equation*}
If $h$ really was a viscosity supersolution of such modified \eqref{eq:HJ}, then the comparison principle \cite{crandalllions83} would allow to estimate its $L^\infty$-distance to viscosity solutions. We prove instead a weak version of the maximum principle (Lemma~\ref{lem:supersol}) where we need to assume some additional regularity on the subsolution to compare $h$ with, but this turns out to be sufficient for our purposes.

The plan of the article is as follows. In \S\ref{s:HJ} we derive the estimates for $h$, and in \S\ref{s:proofs} we prove Theorems~\ref{thm:errorentropy} and \ref{thm:lebquantit}.

\section{Estimates for the Hamilton-Jacobi equation}\label{s:HJ}

We denote by $Q$ the unit square
\begin{equation*}
Q:=(0,1)_t\times (0,1)_x,
\end{equation*}
and consider Lipschitz functions $h$ in $\overline Q$ that solve \eqref{eq:HJ} almost everywhere. In particular this ensures \cite[\S{11.1}]{lions} that $h$ restricted to the parabolic boundary
\begin{equation*}
\partial_0 Q:=\lbrace 0\rbrace_t\times (0,1)_x \; \cup \; (0,1)_t\times\lbrace 0,1\rbrace_x,
\end{equation*}
is compatible with the existence of a viscosity solution $\bar h$ satisfying $\bar h=h$ on $\partial_0 Q$. Moreover, such viscosity solution satisfies $\bar h\geq h$ and
\begin{equation}\label{eq:lipbarh}
\abs{\partial_x \bar h}\leq \norm{\partial_x h}_{L^\infty(Q)}.
\end{equation}
{For a proof of \eqref{eq:lipbarh} see Appendix~\ref{a:lipvisc}.}
The main result of this section is the following estimate for $\norm{h-\bar h}_\infty$.

\begin{prop}\label{prop:errorvisc}
Let {$t_1\in [0,1)$} and {$L\geq 1$} be fixed.
There exists a constant $C>0$ such that, for any {function $h$} with $\Lip(h)\leq L$ solving
\begin{equation*}
\partial_t h +\frac 12 (\partial_x h)^2=0\qquad{\text{a.e. in }Q},
\end{equation*}
if $u=\partial_x h$ is such that $\mu=\partial_t (u^2/2)+\partial_x(u^3/3)$ is a Radon measure in $Q$,
then  it holds
\begin{equation}
\sup_{Q\cap\lbrace t\leq t_1\rbrace}\abs{h-\bar h}\leq C {\norm{\mu_+}^{1/8}},
\end{equation}
where $\bar h$ is the viscosity solution of 
\begin{equation*}
\partial_t \bar h+\frac 12(\partial_x \bar h)^2=0,\qquad \bar h=h\text{ on }\partial_0 Q.
\end{equation*}
\end{prop}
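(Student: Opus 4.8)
The plan is to exploit the variational (Hopf--Lax / Oleinik) structure of the viscosity solution $\bar h$ and show that $h$, while not a genuine viscosity supersolution of the Hamilton--Jacobi equation, is ``almost'' one in a quantitative sense controlled by $\norm{\mu_+}$. Since $\bar h \geq h$ on all of $Q$ (already recorded in the excerpt), the task is the one-sided bound $h \geq \bar h - C\norm{\mu_+}^{1/8}$ on $Q\cap\{t\le t_1\}$. First I would fix a point $z_0=(t_0,x_0)$ with $t_0\le t_1$ and, using the Hopf--Lax representation of $\bar h$, pick the backward characteristic (a straight line segment from $z_0$ down to a point on $\partial_0 Q$) that realizes $\bar h(z_0)$. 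Along this segment $\partial_x\bar h$ is constant, and because $\Lip(h)\le L$ and $|\partial_x\bar h|\le L$ by \eqref{eq:lipbarh}, the whole construction stays in a controlled region; the boundary values of $h$ and $\bar h$ agree there, so if $h$ were a supersolution we could integrate along the characteristic to get $h(z_0)\ge\bar h(z_0)$. The defect in this inequality is exactly what we must estimate by $\norm{\mu_+}$.

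The key step is to quantify ``$h$ is an approximate supersolution''. Here the special role of the quadratic entropy enters: the measure $\mu=\partial_t(u^2/2)+\partial_x(u^3/3)$ is, via $u=\partial_x h$ and $\partial_t h=-u^2/2$, precisely the distributional expression $\mu = \partial_t(\tfrac12(\partial_x h)^2) - \partial_x(\tfrac13(\partial_x h)^3)$ rewritten along the Hamilton--Jacobi flow, and its positive part $\mu_+$ measures the ``wrong-sign'' jumps of $u$ — the rarefaction shocks that an entropy solution cannot have. Concretely I would mollify: let $h_\e = h * \rho_\e$, so $u_\e=\partial_x h_\e$ is smooth, and compute
\begin{equation*}
\partial_t h_\e + \tfrac12(\partial_x h_\e)^2 = \tfrac12\big((\partial_x h)^2\big)*\rho_\e - \tfrac12(\partial_x h_\e)^2 =: r_\e \ge 0
\end{equation*}
by Jensen, so the mollified $h_\e$ is an \emph{exact supersolution up to the nonnegative error $r_\e$} — but $r_\e$ need not be small. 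The point is that $\int r_\e$, or more precisely the contribution of $r_\e$ along any characteristic, is controlled by the commutator/defect measure, which in turn is dominated by $\mu_+$ up to the usual Constantin--E--Titi type algebra for cubic nonlinearities: one has, heuristically, $\partial_t(\tfrac13 u^3) + \partial_x(\tfrac14 u^4)\le 0$-type relations linking $r_\e$ to $\mu$, and crucially only the positive part survives because the negative part of $\mu$ corresponds to admissible (entropy-dissipating) shocks that help rather than hurt. This is the heart of the argument and, I expect, the main obstacle: making rigorous the passage from ``$\mu_+$ small'' to ``$h$ is a supersolution up to $\delta$'' requires a careful weak maximum principle that tolerates the approximate supersolution (this is the ``Lemma~\ref{lem:supersol}'' alluded to in the introduction, where one compares $h$ only with sufficiently regular subsolutions rather than invoking the full comparison principle).

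With the approximate-supersolution property in hand, I would run the comparison argument quantitatively. Take the viscosity solution $\bar h$; approximate it from below by smooth strict subsolutions $\bar h^{(\sigma)}$ with $\partial_t \bar h^{(\sigma)}+\tfrac12(\partial_x\bar h^{(\sigma)})^2\le -\sigma$ and $\bar h^{(\sigma)}\le h$ on $\partial_0 Q$ (possible since $\bar h$ is the maximal subsolution with those boundary data, and a small downward tilt in time plus mollification gives strictness); then compare $h$ against $\bar h^{(\sigma)}$ using the weak maximum principle for the difference, picking up only an error proportional to the defect $\delta=\delta(\norm{\mu_+})$ and to $\sigma$. Optimizing in the mollification scale $\e$ and the parameters $\sigma$ (and whatever auxiliary cutoff is needed near $\partial_0 Q$ to handle the fact that we only control $\{t\le t_1\}$ with $t_1<1$, leaving room in the $x$-direction for characteristics not to exit through the top) yields $\sup_{\{t\le t_1\}}(\bar h - h)\le C\norm{\mu_+}^{1/8}$. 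The exponent $1/8$ (rather than something larger) should come out of balancing several competing powers of $\e$: the commutator error scales like $\e$ times a Besov-type seminorm, the $\mu_+$-mass enters through a length-scale decomposition of the ``bad set'' of characteristics, and Chebyshev on the set where characteristics encounter large $\mu_+$-density costs another power; I would not expect $1/8$ to be sharp, merely what this chain of estimates delivers.
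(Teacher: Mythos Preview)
Your mollification step contains a sign error that breaks the argument. With $u=\partial_x h$ and $\partial_t h=-\tfrac12 u^2$ you get
\[
\partial_t h_\e + \tfrac12(\partial_x h_\e)^2 \;=\; -\tfrac12\,(u^2)*\rho_\e + \tfrac12\,(u_\e)^2 \;=\; -r_\e \;\le\; 0,
\]
not $+r_\e$. Mollification therefore produces a classical \emph{subsolution}, which is consistent with (and in fact gives another proof of) the known inequality $h\le\bar h$, but contributes nothing toward the hard direction $h\ge\bar h - C\norm{\mu_+}^{1/8}$. There is no commutator argument that flips this sign: the defect $r_\e$ is a variance and is controlled by $\abs{\mu}$ (or $\nu$), not by $\mu_+$ alone, and in any case a bound on $r_\e$ would only sharpen the subsolution property.

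The paper does not attempt to make $h$ itself into an approximate supersolution. Instead the regularity burden is placed entirely on the comparison function: one takes the sup-convolution $\bar h_\rho$ (hence $(1/\rho)$-semiconvex), subtracts $\delta t$ to obtain a strict subsolution $\zeta$ of $\partial_t\zeta+\tfrac12(\partial_x\zeta)^2\le-\delta$, and then argues directly that $h-\zeta$ cannot have an interior minimum. At a putative minimum the semiconvexity of $\zeta$ yields a supporting paraboloid; the sublevel set $\{\widetilde\zeta+\eta\ge h\}$ is a ``droplet'' on which, by a quantitative Tartar div-curl computation (this is where $\mu$ enters, and only $\mu_+$ survives because one tests against the nonnegative bump $(\widetilde\zeta+\eta-h)_+$), the variance of $u$ is both $\gtrsim\delta$ (from strictness of the subsolution) and $\lesssim$ a power of $\norm{\mu_+}$. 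The contradiction yields Lemma~\ref{lem:supersol}, and the exponent $1/8$ falls out of balancing $\rho$, $\delta$ and the semiconvexity scale in its hypothesis $\norm{\mu_+}\lesssim\delta^7 r$.
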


As explained in the introduction, the proof of Proposition~\ref{prop:errorvisc} is about showing that if $\mu_+$ is small, then $h$ is \enquote{not far} from being a viscosity supersolution of  \eqref{eq:HJ} with small negative right-hand side. Such property is interesting because super- and subsolutions in the viscosity sense enjoy a comparison principle. In fact instead of proving a supersolution property, we directly prove a comparison principle. The main difference with the comparison principle for viscosity solutions is that we have to assume some additional regularity on the subsolution we are comparing $h$ with, namely semiconvexity. We say that a function $\zeta$ is $(1/r)$-semiconvex if for all points $z,z'$ it holds
\begin{equation*}
\zeta(\theta z +(1-\theta)z')-\theta\zeta(z)-(1-\theta)\zeta(z')\leq\frac 1r,\qquad 0\leq\theta\leq 1.
\end{equation*}
This is equivalent to $\zeta(z)+\abs{z}^2/(2r)$ being convex, and allows to prove the following maximum principle.

\begin{lem}\label{lem:supersol}
For all {$L\geq 0$} there exists a constant $C>0$ such that for any convex open set $U\subset\R_t\times \R_x$ the following holds true. 
\begin{itemize}
\item
Let {a function $h$ with $\Lip(h)\leq L$} solve
\begin{equation*}
\partial_t h +\frac 12 (\partial_x h)^2=0\qquad\text{a.e. in }U,
\end{equation*}
and, denoting $u=\partial_x h$ and $\mu=\partial_t (u^2/2)+\partial_x(u^3/3)$, assume that $\mu$ is a Radon measure in $U$.
\item For some {$\delta,r\in (0,1]$}, let {a function $\zeta$} with $\Lip(\zeta)\leq L$ be a viscosity subsolution of
\begin{equation*}
\partial_t\zeta +\frac 12(\partial_x\zeta)^2\leq -\delta\qquad\text{ in }U,
\end{equation*}
with the additional regularity assumption that $\zeta$ be $(1/r)$-semiconvex.

\item If the positive entropy production is small enough in the sense that
\begin{equation}\label{eq:mu+small}
C \norm{\mu_+} \leq \delta^7 r,
\end{equation}
then for any $(t_0,x_0)\in U$ and $\rho>0$ such that $B_\rho(t_0,x_0)\subset U$ and $\rho\geq \delta^2 r$, the function $(h-\zeta)$ restricted to $B_\rho(t_0,x_0)$ cannot attain its minimum at $(t_0,x_0)$.
\end{itemize}
\end{lem}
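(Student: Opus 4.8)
The plan is to argue by contradiction: suppose that $(h-\zeta)$ attains its minimum over $B_\rho(t_0,x_0)$ at the center $(t_0,x_0)$, and derive a violation of the smallness condition \eqref{eq:mu+small}. The natural device is a Kruzhkov-type doubling of variables, but here the key asymmetry is that $\zeta$ is $(1/r)$-semiconvex while $h$ solves Burgers' equation only in the sense of distributions with entropy-production defect $\mu$. So instead of the usual penalization $|z-z'|^2/\e$, I would exploit the semiconvexity of $\zeta$ directly: the function $z\mapsto (h(z)-\zeta(z))$ having an interior minimum at $(t_0,x_0)$ means, after adding the convex paraboloid, that $h(z) + |z-(t_0,x_0)|^2/(2r) - \big(\zeta(z)+|z|^2/(2r)\big) + (\text{linear})$ has an interior min there, i.e. $h$ plus a smooth convex function has an interior minimum; thus $h$ is touched from below at $(t_0,x_0)$ by a $C^{1,1}$ (in fact $C^\infty$) test function $\psi$ with $\psi(t_0,x_0)=h(t_0,x_0)$, $D^2\psi \geq -\frac1r \mathrm{Id}$, and with $\partial_x\psi(t_0,x_0)$ controlled. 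The viscosity subsolution inequality for $\zeta$ at the touching point gives $\partial_t\psi + \frac12(\partial_x\psi)^2 \leq -\delta$ there.

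The heart of the matter is then a \emph{propagation} step: I want to say that because $h$ is a near-solution of Burgers (the only defect being the sign and size of $\mu$), the inequality $\partial_t\psi+\frac12(\partial_x\psi)^2\leq -\delta$ at one point forces $h$ to drop below $\psi$ nearby, contradicting the fact that $\psi$ touches from below. Concretely, I would look at the characteristic emanating from $(t_0,x_0)$ with speed $\partial_x\psi(t_0,x_0)=:p$: along the line $x = x_0 + p(t-t_0)$ one expects, for an entropy solution with this data, $h$ to decrease at rate $-\frac12 p^2$, while $\psi$ decreases at rate $\partial_t\psi = -\frac12 p^2 - \delta$ or less, so $h$ stays above $\psi$ only if the Burgers defect $\mu_+$ compensates the gap $\delta$ accumulated over a time interval $\sim \rho$. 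Making this rigorous is where I expect the work — and the powers of $\delta$ and $r$ in \eqref{eq:mu+small} — to come from. The technical route I would follow is: (i) average the Burgers/Hamilton–Jacobi relation over a small square of size $s$ (with $\delta^2 r \lesssim s \lesssim \rho$) to get a defect-measure version of $\partial_t h + \frac12(\partial_x h)^2 = -(\text{something controlled by } s^{-1}|\mu|(\cdot))$ in an averaged sense; (ii) use the one-sided (semiconvexity) control on the test function together with the $B^{1/3}_{3,\infty}$-type oscillation bound / the quadratic-entropy identity $\partial_t(u^2/2)+\partial_x(u^3/3)=\mu$ to convert the pointwise touching into an inequality for spatial averages of $u=\partial_x h$; (iii) integrate along characteristics on the scale $s$, picking up $\delta\cdot s$ on one side and $\|\mu_+\|$ (times negative powers of $s$, hence of $\delta r$) on the other, and optimize in $s$ to land on $C\|\mu_+\| \leq \delta^7 r$.

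The main obstacle, and the reason the exponents are as bad as $\delta^7$, is that $h$ is merely Lipschitz: one cannot evaluate $\partial_x h$ pointwise along a single characteristic, so every pointwise statement about the touching function $\psi$ must be traded for an $L^1$- or $L^2$-averaged statement about $u$ over squares, and each such averaging costs a negative power of the scale $s$, which is itself constrained from below by $\delta^2 r$ (the scale on which $\psi$ looks convex/affine). Balancing the Lipschitz bound $L$, the semiconvexity scale $r$, the subsolution gap $\delta$, and the size $\rho \geq \delta^2 r$ of the ball against $\|\mu_+\|$ is the bookkeeping that produces \eqref{eq:mu+small}; I would expect to use the hypothesis $\rho\geq\delta^2 r$ precisely to guarantee that the characteristic segment on which one integrates stays inside $B_\rho(t_0,x_0)$ long enough for the accumulated $\delta\cdot s$ to dominate. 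A secondary technical point is that the convex set $U$ and the convexity of $\zeta$ must be used to ensure the doubling/penalization argument keeps the relevant maxima in the interior; this is routine but needs the convexity hypothesis stated in the lemma.
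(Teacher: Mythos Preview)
Your opening move is right and matches the paper: semiconvexity of $\zeta$ plus Alexandrov's theorem yields an affine function $\zeta_a$ with $\partial_t\zeta_a+\tfrac12(\partial_x\zeta_a)^2\leq-\delta$ that touches $\zeta$ from below at $(t_0,x_0)$ modulo the paraboloid $|z|^2/(2r)$. This is the paper's Step~1.

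After that, however, your plan diverges and misses the mechanism that actually makes the lemma work. You propose to integrate along characteristics and accumulate the gap $\delta\cdot s$ against $\|\mu_+\|$. The paper does \emph{not} argue along characteristics at all; the engine is instead Tartar's div--curl structure for Burgers. Concretely, one introduces a slightly steeper parabola $\widetilde\zeta=\zeta_a-\tfrac{1+\delta}{2r}|(t,x)|^2$ and the sublevel set
\[
\Omega_\eta=B_\rho\cap\{\widetilde\zeta+\eta\geq h\},
\]
chosen so that $(\widetilde\zeta+\eta-h)_+$ is compactly supported in $B_\rho$ (this is where $\rho\geq\delta^2 r$ is used). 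The compact support lets one integrate by parts to show $\langle\partial_x\widetilde\zeta\rangle=\langle u\rangle$ and $\langle\partial_t\widetilde\zeta\rangle=\langle -u^2/2\rangle$, which via Jensen converts the subsolution gap into a lower bound $\delta\leq\langle(u-\langle u\rangle)^2\rangle$ on the \emph{variance} of $u$ over $\Omega_\eta$. The matching upper bound comes from pairing the curl-free field $\nabla h=(-u^2/2,u)$ with the divergence-controlled field $(u^2/2,u^3/3)$ (divergence $=\mu$): their pointwise product is $u^4/12$, and the div--curl identity (the quantitative form in \cite[Proposition~3.2]{DOW04}) bounds $\langle(u-\langle u\rangle)^4\rangle$ by $\eta\|\mu_+\|/|\Omega_\eta|$ plus a geometric error $\lesssim r^{-1}\diam(\Omega_\eta)$. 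Optimizing $\eta$ then yields the contradiction with \eqref{eq:mu+small}.

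Your proposal never invokes this compensated-compactness identity, and I do not see how the ``averaged characteristics'' route recovers the needed control on the oscillation of $u$: without the div--curl pairing, averaging the Hamilton--Jacobi relation over a square only gives you $\langle\partial_t h\rangle+\tfrac12\langle u\rangle^2\leq 0$ plus a variance term on the wrong side, and there is no obvious way to bound that variance by $\|\mu_+\|$ alone. The $B^{1/3}_{3,\infty}$ bound you allude to requires control of \emph{all} entropies (the measure $\nu$), which is explicitly not assumed here. So as written the argument has a genuine gap at step~(ii)--(iii): the missing idea is the sublevel-set construction together with the Tartar div--curl estimate.
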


With Lemma~\ref{lem:supersol} at hand, Proposition~\ref{prop:errorvisc} will follow by regularizing $\bar h$ (using $\sup$-convolution) and appropriately balancing the scale of regularization with the smallness of $\mu_+$ and the smallness of the negative right-hand side modification of \eqref{eq:HJ}.

\begin{proof}[Proof of Lemma~\ref{lem:supersol}.] We assume that $L=1$, the general case entailing no additional difficulty. Suppose that \eqref{eq:mu+small} holds for some constant $C>0$, and that $(h-\zeta)$ restricted to some ball $B_\rho(t_0,x_0)\subset U$ with $\rho\geq \delta^2r$, attains its minimum at $(t_0,x_0)$. We are going to obtain a contradiction if  the constant $C$ is large enough. We use coordinates in which $(t_0,x_0)=(0,0)$, and assume without loss of generality that $h(0,0)=\zeta(0,0)=0$. 

\textbf{Step 1.} There exists an affine function $\zeta_a$ {with $\Lip(\zeta_a)\leq 1$ and} such that
\begin{gather}\label{eq:affinesubsol}
\partial_t\zeta_a +\frac 12 (\partial_x \zeta_a)^2 \leq -\delta,\\
\label{eq:subgradient}
\zeta_a(0,0)=\zeta(0,0),\qquad \zeta(t,x)\geq \zeta_a(t,x)-\frac{1}{2r}(t^2+x^2).
\end{gather}

At any point $(t,x)$ where $\zeta$ admits a Taylor expansion
\begin{equation*}
\zeta(t+s,x+y)=\zeta(t,x)+\partial_t\zeta(t,x) s +\partial_x\zeta(t,x)y+\mathcal O(s^2+y^2),
\end{equation*}
the function $(s,y)\mapsto \zeta(t+s,x+y)-\varphi(s,y)$ with
\begin{equation*}
\varphi(s,y)= \partial_t\zeta(t,x) s +\partial_x\zeta(t,x)y + M (s^2 +y^2),
\end{equation*}
admits, provided $M$ is large enough, a local maximum at $(0,0)$. Since $\zeta$ is a viscosity subsolution, {by definition of the latter property (see e.g. \cite[Definition~2.2]{DOW04})} we deduce that at any such point $(t,x)$ it holds
\begin{equation*}
\partial_t\zeta(t,x) +\frac 12 (\partial_x\zeta(t,x))^2\leq -\delta.
\end{equation*}
Since $\zeta$ is semiconvex, Alexandrov's theorem {\cite{alexandrov39} (see also \cite[Theorem~3.11]{niculescupersson})} ensures that such an expansion is valid at almost every $(t,x)\in U$. In particular there exists a sequence $(t_k,x_k)\to 0$ such that $\zeta$ is differentiable at $(t_k,x_k)$ and
\begin{equation*}
\partial_t\zeta(t_k,x_k) +\frac 12 (\partial_x\zeta(t_k,x_k))^2\leq -\delta.
\end{equation*}
Up to extracting a subsequence we may assume that $(v_k,w_k):=\nabla \zeta(t_k,x_k)$ converges towards $(v,w)\in \R^2$ that satisfies
\begin{equation}\label{eq:vw}
v +\frac 12 w^2 \leq -\delta.
\end{equation} 
Moreover the convex function $\zeta+(2r)^{-1}((t-t_k)^2+(x-x_k)^2)$ lies above its tangent: for all $(t,x)\in U$ it holds
\begin{equation*}
\zeta(t,x)+\frac{1}{2r}((t-t_k)^2+(x-x_k)^2)\geq \zeta(t_k,x_k) + v_k (t-t_k) +w_k(x-x_k).
\end{equation*}
Passing to the limit yields \eqref{eq:subgradient} for
$\zeta_a(t,x)=\zeta(0,0) +v t +w x$,
and \eqref{eq:affinesubsol} follows from \eqref{eq:vw}.

\textbf{Step 2.} 
For any {height} $\eta>0$ with
{
\begin{equation}\label{eq:etarestrict1}
\eta \ll \delta^5 r,
\end{equation}
(where the symbol $\ll$ denotes inequality up to a small universal constant)},
letting 
\begin{equation*}
\widetilde \zeta(t,x) :=\zeta_a(t,x)-\frac{1}{2r}\abs{(t,x)}^2-\frac{\delta}{2r}\abs{(t,x)}^2,\qquad \abs{(t,x)}^2=t^2+x^2,
\end{equation*}
{ and defining as in \cite{DOW04} the set}
\begin{equation*}
\Omega_\eta:=B_\rho\cap \left\lbrace \widetilde \zeta +\eta \geq h\right\rbrace,
\end{equation*}
it holds
\begin{equation}\label{eq:boundsomegaeta}
B_{\eta/3}\subset\Omega_\eta\subset B_{2(r\eta/\delta)^{1/2}}{\subset\subset B_\rho}.
\end{equation}

Since $h$ and $\zeta_a$ are $1$-Lipschitz, in $B_{\eta/3}$ we obtain
\begin{align*}
\widetilde \zeta +\eta -h & \geq \eta-2\abs{(t,x)} -\frac{1+\delta}{2r}\abs{(t,x)}^2\\
&\geq \eta-\left(2+\frac{\eta}{3r} \right)\abs{(t,x)} \geq \eta-\left(2+\frac{\delta^5}{30}\right)\abs{(t,x)}\\
&\geq \eta-3\abs{(t,x)}>0,
\end{align*}
which implies $B_{\eta/3}\subset \Omega_\eta$.

{The strict inclusion $B_{2(r\eta/\delta)^{1/2}}\subset\subset B_\rho$ follows from \eqref{eq:etarestrict1} and $\rho\geq \delta^2 r$  which imply 
\begin{equation*}
2\left(\frac{r\eta}{\delta}\right)^{1/2}\ll \delta^2r <\rho.
\end{equation*}
Moreover} since $h\geq \zeta$ in $B_\rho$ and \eqref{eq:subgradient} holds, in $B_\rho\setminus B_{2(r\eta/\delta)^{1/2}}$ we have
\begin{equation*}
h-\widetilde \zeta \geq \frac{\delta}{2r}(t^2+x^2) \geq 2\eta,
\end{equation*}
which shows $\Omega_\eta\subset B_{2(r\eta/\delta)^{1/2}}$.

\textbf{Step 3.} Denoting by $\langle f \rangle$ the average of a function $f$ in $\Omega_\eta$ {and assuming
\begin{equation}\label{eq:etarestrict2}
\eta\ll \delta^3r,
\end{equation}
}
it holds
{
\begin{equation}\label{eq:bounddeltau4}
\delta\leq  \left\langle \left(u-\langle u\rangle\right)^2\right\rangle.
\end{equation} 
}

Using the definition of $\widetilde\zeta$ {in Step 2} together with \eqref{eq:affinesubsol}, \eqref{eq:etarestrict2} and the fact that {$\abs{\partial_x\zeta_a}\leq 1$}, in $B_{2(r\eta/\delta)^{1/2}}$  we find
\begin{align*}
\partial_t\widetilde\zeta +\frac 12 (\partial_x\widetilde\zeta)^2 & = \partial_t\zeta_a+\frac 12 (\partial_x\zeta_a)^2 -\frac{1+\delta}{r}t - 2\frac{1+\delta}{r}x\partial_x\zeta_a +\frac{(1+\delta)^2}{2r^2}x^2\\
& \leq -\delta +C \left(\frac{\eta}{\delta^3r}\right)^{1/2}\delta \leq -\frac\delta 2.
\end{align*}
By \eqref{eq:boundsomegaeta} this holds in particular in $\Omega_\eta$ and therefore using Jensen's inequality we have
\begin{equation*}
-\frac{\delta}{2} \geq \langle \partial_t\widetilde\zeta\rangle +\frac 12 \langle (\partial_x\widetilde\zeta)^2\rangle \geq 
\langle \partial_t\widetilde\zeta\rangle +\frac 12 \langle \partial_x\widetilde\zeta\rangle^2. 
\end{equation*}
{Moreover} since $u=\partial_x h$ {and $(\widetilde\zeta +\eta -h)_+$ has compact support in $B_\rho$} it holds
\begin{equation}\label{eq:meandxzetatilde}
\langle \partial_x\widetilde\zeta\rangle - \langle u\rangle  =
\frac{1}{\abs{\Omega_\eta}}\int_{B_\rho}\partial_x\left[ (\widetilde\zeta +\eta -h)_+\right] =0,
\end{equation}
and similarly $\langle \partial_t\widetilde\zeta\rangle=\langle -u^2/2\rangle$. This implies
\begin{equation*}
-\frac{\delta}{2}\geq -\frac 12 \left\langle \left(u-\langle u\rangle\right)^2\right\rangle ,
\end{equation*}
and proves \eqref{eq:bounddeltau4}.

\textbf{Step 4.} It holds
\begin{equation}\label{eq:boundu4mu}
{\left\langle \left(u-\langle u\rangle\right)^2\right\rangle^2}\lesssim \left(\frac{\eta}{\delta r}\right)^{1/2}+\frac{\norm{\mu_+}}{\eta},
\end{equation}
where the symbol $\lesssim$ stands for inequality up to a universal constant.

{The argument relies as in \cite{DOW04} on a quantification of Tartar's application of the div-curl lemma to equations of Burgers type \cite{tartar83}.
{By H\"older's inequality and} \cite[Proposition~3.2]{DOW04} we have
\begin{align*}
\left\langle \left(u-\langle u\rangle\right)^2\right\rangle^2 & 
\leq \left\langle \left(u-\langle u\rangle\right)^4\right\rangle\\
& \lesssim \left\langle
\left(
\begin{array}{c}
-\frac{u^2}{2} \\
u
\end{array}
\right)
\cdot
\left(
\begin{array}{c}
u^2/2 \\
u^3/3
\end{array}
\right)
 \right\rangle -\left\langle
 \left(
\begin{array}{c}
-\frac{u^2}{2} \\
u
\end{array}
\right) \right\rangle\cdot\left\langle
\left(
\begin{array}{c}
u^2/2 \\
u^3/3
\end{array}
\right)
 \right\rangle \\
& = \left\langle
\left(
\begin{array}{c}
\partial_t h \\
\partial_x h
\end{array}
\right)
\cdot
\left(
\begin{array}{c}
u^2/2 \\
u^3/3
\end{array}
\right)
 \right\rangle -\left\langle
 \left(
\begin{array}{c}
\partial_t h \\
\partial_x h
\end{array}
\right) \right\rangle\cdot\left\langle
\left(
\begin{array}{c}
u^2/2 \\
u^3/3
\end{array}
\right)
 \right\rangle 
\end{align*}
Recalling \eqref{eq:meandxzetatilde} and its counterpart for the $t$-derivative, we deduce
\begin{align*}
\left\langle \left(u-\langle u\rangle\right)^2\right\rangle^2 & 
\lesssim\left\langle
 \left(
 \begin{array}{c}
\partial_t (h-\widetilde\zeta-\eta) \\
\partial_x (h-\widetilde\zeta-\eta) 
\end{array}
 \right)
 \cdot
\left(
\begin{array}{c}
u^2/2 \\
u^3/3
\end{array}
\right)
 \right\rangle \\
 &\quad +
 \left\langle
\left(
\begin{array}{c}
\partial_t \widetilde\zeta-\langle\partial_t\widetilde\zeta\rangle \\
\partial_x \widetilde\zeta-\langle\partial_x\widetilde\zeta\rangle
\end{array}
\right)
\cdot
\left(
\begin{array}{c}
u^2/2 \\
u^3/3
\end{array}
\right)
 \right\rangle\\
 &\leq \frac{1}{\abs{\Omega_\eta}}\int_{\Omega_\eta}(\widetilde\zeta+\eta-h)d\mu + \frac{2}{r}\diam(\Omega_\eta)
\end{align*} 
For the last inequality we used the fact that, $\zeta_a$ being affine, we have 
\begin{equation}
\nabla \widetilde \zeta -\langle\nabla\widetilde\zeta\rangle = -\frac{1+\delta}{r}\left(\begin{array}{c}t \\ x\end{array}\right). 
\end{equation}
Since in $\Omega_\eta$ it holds $0\leq \widetilde\zeta+\eta-h\leq \eta+\zeta-h\leq \eta$, we find
\begin{equation*}
\left\langle \left(u-\langle u\rangle\right)^2\right\rangle^2  
\lesssim \frac{\eta}{\abs{\Omega_\eta}}\norm{\mu_+}+\frac 2r \diam(\Omega_\eta).
\end{equation*}
Using the inclusions \eqref{eq:boundsomegaeta} satisfied by $\Omega_\eta$ we obtain \eqref{eq:boundu4mu}.}

\textbf{Step 5.} Conclusion. 

We choose $\eta={(\delta r)^{1/3}}\norm{\mu_+}^{2/3}$ in order to balance the two terms on the right-hand side of \eqref{eq:boundu4mu}. Since
\begin{equation*}
{(\delta r)^{1/3}}\norm{\mu_+}^{2/3}\leq \frac{1}{ C^{2/3}} \delta^5 r,
\end{equation*}
the restrictions \eqref{eq:etarestrict1} and \eqref{eq:etarestrict2} on $\eta$ are indeed satisfied provided $C$ is large enough.
Moreover combining \eqref{eq:bounddeltau4} and \eqref{eq:boundu4mu} with the smallness assumption \eqref{eq:mu+small} on $\mu_+$, we obtain
\begin{equation*}
\delta^2 \lesssim {\left(\frac{\norm{\mu_+}}{\delta r}\right)^{1/3}}\lesssim\frac{1}{ C^{1/3}} \delta^2,
\end{equation*}
and therefore the desired contradiction for large enough $C$.
\end{proof}

\begin{proof}[Proof of Proposition~\ref{prop:errorvisc}.]
Note that since $h\leq\bar h$  we only need to estimate $(h-\bar h)$ from below.
Given $\rho\in (0,1)$ we consider the sup-convolution
\begin{equation*}
\bar h_\rho(t,x)=\sup_{(s,y)\in Q}\left\lbrace \bar h(s,y)-\frac{1}{2\rho}\left((t-s)^2+(x-y)^2\right)\right\rbrace.
\end{equation*}
{As a supremum of $(1/\rho)$-semiconvex functions }this function $\bar h_\rho$ is $(1/\rho)$-semiconvex.
We also introduce parameters $\delta\in (0,1)$, $H,M\geq 1$ and define
\begin{equation*}
\zeta(t,x)=\bar h_\rho(t,x) - \delta t - \frac{M}{2} ((t-t_1)_+)^2 - H\rho,
\end{equation*}
so that $\zeta$ is $(1/r)$-semiconvex with $1/r=1/\rho + M$. We want to use Lemma~\ref{lem:supersol} to deduce that $h\geq\zeta$ in $Q$ and from there obtain the desired lower bound on $(h-\bar h)$. {We split the proof in the following way : in Step 1 we prove that $\zeta$ is a viscosity subsolution as in Lemma~\ref{lem:supersol}; then we show that $h\geq \zeta$ near the boundary, dealing with the parabolic boundary $\partial_0 Q$ in Step 2 and the remaining boundary in Step 3; in Step 4 we check that the Lipschitz constant of $\zeta$ depends only on $L$ and $t_1$ in the relevant region; eventually in Step 5 we apply Lemma~\ref{lem:supersol} and optimize the choices of $\rho$ and $\delta$ in order to conclude.}

\textbf{Step 1.} For {$\rho\ll 1/L$}, the function $\zeta$ is a viscosity subsolution of
\begin{equation}\label{eq:zetasubsol}
\partial_t\zeta +\frac 12 (\partial_x\zeta)^2\leq -\delta\quad\text{ in }\widetilde Q :=Q\cap \lbrace  \dist(\cdot,\partial Q)> 4L\rho\rbrace.
\end{equation}

It suffices to show that $\bar h_\rho$ is a viscosity subsolution of
\begin{equation}\label{eq:barhrhosubsol}
\partial_t\bar h_\rho + \frac 12 (\partial_x\bar h_\rho)^2\leq 0\quad\text{ in }\widetilde Q.
\end{equation}
{The fact that sup convolution preserves the viscosity subsolution property is well-known, see e.g. \cite[Lemma~A.5]{crandallishiilions92}. For the convenience of the reader we provide a proof of \eqref{eq:barhrhosubsol} in our setting.}
Let $\varphi(t,x)$ be a smooth function such that $(\bar h_\rho -\varphi)$ attains its maximum at $(t_0,x_0)\in\widetilde Q$, and assume w.l.o.g. that $\varphi(t_0,x_0)=\bar h_\rho(t_0,x_0)$. For any $(s,y)\in U$ with 
\begin{equation*}
d:=\abs{(s,y)-(t_0,x_0)}=\sqrt{(t_0-s)^2+(x_0-y)^2 } \geq 2L\rho,
\end{equation*}
since by \eqref{eq:lipbarh} we have $\Lip(\bar h)\leq L$, it holds
\begin{align*}
\bar h(s,y)-\frac{1}{2\rho}d^2 &\leq \bar h(t_0,x_0)+
\left(L-\frac{d}{2\rho}\right) d \\
& \leq \bar h(t_0,x_0)\leq \bar h_\rho(t_0,x_0).
\end{align*}
Hence the supremum in the definition of $\bar h_\rho(t_0,x_0)$ is attained at some $(s_0,y_0)\in B_{2L\rho}(t_0,x_0)\subset Q$, and
\begin{equation}\label{eq:phit0x0}
\varphi(t_0,x_0)={\bar h_\rho(t_0,x_0)}=\bar h(s_0,y_0)-\frac{1}{2\rho}\left((t_0-s_0)^2+(x_0-y_0)^2\right).
\end{equation}
Moreover since $(\bar h_\rho -\varphi)$ is maximal at $(t_0,x_0)$ with value zero, it holds
\begin{equation}\label{eq:phigeqbarhrho}
\bar h(s,y)-\frac{1}{2\rho}\left((t-s)^2 +(x-y)^2\right) \leq \varphi(t,x)\qquad\forall (t,x),(s,y)\in Q.
\end{equation}
In particular for all $(s,y)\in B_{2L\rho}(s_0,y_0)\subset Q$ we may choose 
\begin{equation*}
(t,x)=(s-s_0+t_0,y-y_0 +x_0)\in B_{2L\rho}(t_0,x_0)\subset Q,
\end{equation*}
in \eqref{eq:phigeqbarhrho} and obtain
\begin{equation*}
\bar h(s,y)\leq  \varphi(s-s_0+t_0,y-y_0 +x_0)
+\frac{1}{2\rho}\left((t_0-s_0)^2 +(x_0-y_0)^2\right)=:\psi(s,y).
\end{equation*}
Moreover \eqref{eq:phit0x0} ensures $\psi(s_0,y_0)=\bar h(s_0,y_0)$, hence $\bar h-\psi$ has a local maximum at $(s_0,y_0)$. {Since $\bar h$ is} a viscosity solution we deduce that
\begin{equation*}
\partial_t\varphi(t_0,x_0)+\frac 12 (\partial_x\varphi(t_0,x_0))^2 = \partial_t\psi(s_0,y_0)+\frac 12 (\partial_x\psi(s_0,y_0))^2\leq 0,
\end{equation*} 
which proves \eqref{eq:barhrhosubsol}.

\textbf{Step 2.} Provided $H$ is large enough {(depending only on $L$)} it holds
\begin{equation*}
\zeta\leq h \quad\text{ in }Q\cap \left\lbrace \dist(\cdot,\partial_0 Q)\leq 4L\rho + \delta^2 r\right\rbrace.
\end{equation*}

The definition of $\bar h_\rho$ implies
\begin{equation*}
\bar h_\rho(t,x)\leq \bar h(t,x)+\frac{L^2}{2}\rho.
\end{equation*}
By definition of $\zeta$ this yields
\begin{equation*}
\zeta(t,x)-h(t,x)\leq -\frac M2 ((t-t_1)_+)^2 -H\rho +\frac{L^2}{2}\rho+\bar h(t,x)-h(t,x),
\end{equation*}
so that by $\bar h=h$ on $\partial_0 Q$ and the Lipschitz continuities of $h$ and $\bar h$ \eqref{eq:lipbarh} we obtain
\begin{equation}\label{eq:diffzetah}
\zeta(t,x)-h(t,x)\leq -\frac M2 ((t-t_1)_+)^2 -H\rho +\frac{L^2}{2}\rho+ 2L\dist((t,x),\partial_0 Q).
\end{equation}
Therefore if $\dist((t,x),\partial_0 Q)\leq 4L\rho+ \delta^2r\leq 5L\rho$ we have
\begin{align*}
\zeta(t,x)-h(t,x) & \leq \bar h_\rho(t,x)-\bar h(t,x) +\bar h(t,x)-h(t,x) -H\rho\\
&\leq \left({\frac{L^2}{2}}+10L^2-H\right)\rho,
\end{align*}
and it suffices to choose $H\geq  11L^2$.

\textbf{Step 3.} Provided $M$ is large enough {(depending only on $L$ and $t_1$)} it holds 
\begin{equation*}
\zeta\leq h\quad\text{ in }Q\cap{\lbrace t\geq (1+t_1)/2\rbrace}.
\end{equation*}

{For $t\geq (1+t_1)/2$ we have by \eqref{eq:diffzetah}
\begin{align*}
\zeta(t,x)-h(t,x)&\leq \frac{L^2}{2}\rho+2L-\frac M8 (1-t_1)^2\\
&\leq \frac{L^2}{2}+2L-\frac{M}{8}(1-t_1)^2,
\end{align*}
and it suffices to choose $M\geq (4L^2+16L)/{(1-t_1)^2}$.}

{
\textbf{Step 4.} We have
\begin{equation*}
\Lip(\zeta)\leq 2L + 2M \qquad\text{in }\widetilde Q,
\end{equation*}
where $\widetilde Q = Q\cap \lbrace \dist(\cdot,\partial Q)>4L\rho\rbrace$ as in Step 1.
}

{
It was shown in Step 1 that for $(t_0,x_0)$ in $\widetilde Q$, the supremum in the definition of $\bar h_\rho(t_0,x_0)$ is attained at some $(s_0,y_0)\in B_{2L\rho}(t_0,x_0)$. It follows that for any small $(t,x)$ we have
\begin{align*}
\bar h_\rho(t_0,x_0)& -\bar h_\rho(t_0+t,x_0+x) \\
&=\bar h(s_0,y_0)-\frac{1}{2\rho}\abs{(t_0-s_0,x_0-y_0)}^2 \\
&\quad -\sup_{(s,y)\in Q}\left\lbrace \bar h(s,y)-\frac{1}{2\rho}\abs{(t_0+t-s,x_0+x-y)}^2   \right\rbrace \\
&\leq \frac{1}{2\rho}\left(2 (t_0-s_0)t + 2(x_0-y_0)x + \abs{(t,x)}^2\right)\\
&\leq {2L}\abs{(t,x)} + \frac{1}{2\rho}\abs{(t,x)}^2.
\end{align*}
This implies $\abs{\nabla\bar h_\rho}\leq 2L$ in $\widetilde Q$. Therefore in $\widetilde Q$ it holds
\begin{align*}
\Lip(\zeta) & \leq 2L + \delta + M,
\end{align*}
which concludes the proof of Step 4 since $\delta\leq 1$ and $M\geq 1$. }

\textbf{Step 5.} Conclusion.

{Recalling \eqref{eq:zetasubsol} and Step 4, Lemma~\ref{lem:supersol} ensures the existence of a constant $C>0$ depending through $\Lip(h)$ and $\Lip(\zeta)$ only on $L$ and $t_1$, such that  if $C\norm{\mu_+}\leq \delta^7 r$ then the minimum of $(h-\zeta)$ in $\overline Q$ cannot be attained at any $x\in \widetilde Q$ such that $B_{\delta^2r}(x)\subset\widetilde Q$. Moreover if $4L\rho +\delta^2r\leq (1-t_1)/2$ then by Steps 2 and 3, at any $x\in\widetilde Q$ such that $B_{\delta^2r}(x)$ is not contained in $\widetilde Q$ it must hold $h-\zeta\geq 0$.} 

{
Since $\delta^2 r\leq \rho$ we deduce that if $\rho\leq\rho_0:=(1-t_1)/(8L+2)$ and $C\norm{\mu_+}\leq \delta^7 r$ then $h-\zeta\geq 0$ in $\overline Q$.}
Hence for $t\leq t_1$ it holds
\begin{align*}
h-\bar h 
& \geq \zeta-\bar h \\
& =  \bar h_\rho -\bar h -\delta t - H\rho \\
&\geq - H \rho -\delta,
\end{align*}
and therefore
\begin{equation*}
\sup_{Q\cap\lbrace t\leq t_1\rbrace}\abs{h-\bar h}\leq H \rho +\delta.
\end{equation*}
Choosing $\delta=\rho$ and recalling that $r=\rho/(1+\rho M)\geq \rho/(1+M)\geq\rho/(2M)$ we conclude that 
\begin{equation*}
2MC\norm{\mu_+}\leq {\rho^8} \quad\Longrightarrow \quad \sup_{Q\cap\lbrace t\leq t_1\rbrace}\abs{h-\bar h}\leq (1+H)\rho.
\end{equation*}
If $\norm{\mu_+}\leq {\rho_0^8}/(2MC)$ we can apply this to {$\rho=(2MC\norm{\mu_+})^{1/8}$} to finish the proof. If $\norm{\mu_+}>{\rho_0^8}/(2MC)$ we can simply invoke the fact that $\abs{h- \bar h}$ is bounded by a constant depending only on $L$.
\end{proof}

\section{Proofs of Theorems~\ref{thm:errorentropy} and \ref{thm:lebquantit}}\label{s:proofs}

In this section we use the symbol $\lesssim$ to denote inequality up to a constant depending only on $\norm{u}_\infty$. 

\begin{proof}[Proof of Theorem~\ref{thm:errorentropy}.]
Let $h$ be the Lipschitz solution of \eqref{eq:HJ} such that $u=\partial_x h$ {and $-u^2/2=\partial_t h$}, let $\bar h$ be as in \S\ref{s:HJ}  the viscosity solution of \eqref{eq:HJ} in $Q_1$ with $\bar h =h$ on the parabolic boundary $\partial_0 Q_1$. By Proposition~\ref{prop:errorvisc} it holds
\begin{equation}\label{eq:weakestim}
\sup_{Q_{7/8}}\abs{\bar h -h}\lesssim \mu_+(Q_1)^{\frac 18}.
\end{equation}
This estimate tells us that $u$ is close to the entropy solution $\zeta=\partial_x \bar h$ in a weak sense. The rest of the proof consists in transforming this weak estimate into the desired  $L^1$-estimate: in a first step we quantify the compactness of solutions of \eqref{eq:burgers} satisfying \eqref{eq:mu}, and in a second step we use this quantitative compactness and a standard interpolation argument to conclude.

\textbf{Step 1.} We show that
\begin{equation}\label{eq:compactquant}
\sup_{\abs{\xi}\leq r/4}\int_{Q_{3/4}}(u(\cdot +\xi) -u)^4 \lesssim r(1+\abs{\mu}(Q_1))+\frac 1r  \mu_+(Q_1)^{\frac 18},
\end{equation}
for all $r\in (0,1/8)$.

The proof relies on a \enquote{div-curl} argument used also in \cite{giacomelliotto05}. First we use the Galilean invariance of \eqref{eq:burgers}: for any  constant $c\in\R$ the function $\tilde u =u-c$ satisfies
\begin{align*}
(\partial_t +c\partial_x)\tilde u +\partial_x \frac {\tilde u^2}{2} &=0,\\
(\partial_t+c\partial_x) \frac {\tilde u^2}{2} +\partial_x \frac{\tilde u^3}{3} & =\mu,
\end{align*}
and $\tilde u=\partial_x\tilde h$, $-\tilde u^2/2=(\partial_t + c\partial_x) \tilde h$, where
\begin{equation*}
\tilde h = h-cx+\frac 12 c^2t.
\end{equation*}
We infer that for any $z_0\in Q_{3/4}$ it holds
\begin{align*}
\frac 1{12} \tilde u^4 & =\left(
\begin{array}{c}
\frac 12 \tilde u^2\\
\frac 13 \tilde u^3
\end{array}
\right)
\cdot
\left(
\begin{array}{c}
(\partial_t+c\partial_x)(\tilde h-\tilde h(z_0))\\
\partial_x(\tilde h-\tilde h(z_0))
\end{array}
\right) \\
& = \left(
\begin{array}{c}
\partial_t+c\partial_x\\
\partial_x
\end{array}
\right)\cdot\left[
(\tilde h-\tilde h(z_0))\left(
\begin{array}{c}
\frac 12 (u-c)^2\\
\frac 13 (u-c)^3
\end{array}
\right)\right]\\
&\quad
-(\tilde h-\tilde h(z_0))\mu.
\end{align*}
We multiply this identity by a smooth cut-off function at scale $r\in (0,1/8)$ and deduce, restricting
 ourselves to $\abs{c}\lesssim 1$ and recalling that $\Lip(\bar h)\lesssim 1$, 
\begin{equation}\label{eq:tildeu4}
\int_{Q_{r/2}(z_0)}\tilde u^4 \lesssim \frac 1r \int_{Q_{r}(z_0)}\abs{\tilde h-\tilde h(z_0)}+r\abs{\mu}(Q_{r}(z_0)).
\end{equation}
To estimate the first term on the right-hand side we wish to pass from $h$ to the viscosity solution $\bar h$,  since it is semi-concave and in particular twice differentiable almost everywhere. In fact in $Q_{7/8}$ the second derivative of $\bar h$ in any direction is bounded from above by a universal constant \cite[Theorem~13.1]{lions} and therefore, in conjunction with $\Lip(\bar h)\lesssim 1$,
\begin{equation}\label{eq:d2barh}
\int_{Q_{7/8}}\abs{\nabla^2 \bar h}\lesssim 1.
\end{equation}
We write
\begin{align*}
\int_{Q_{r}(z_0)}\abs{\tilde h-\tilde h(z_0)}& \lesssim r^2\sup_{Q_{7/8}}\abs{h-\bar h} \\
&\quad + \int_{Q_{r}(z_0)}\abs{\bar h-\left(\bar h(z_0)+c(x-x_0)-\frac 12 c^2 (t-t_0)\right)}.
\end{align*}
Next we assume that $\bar h$ is differentiable at $z_0$ (which is the case for almost every $z_0$) and choose $c=c(z_0)=\partial_x\bar h(z_0)$, so that $\partial_t \bar h (z_0)=-c^2/2$ and, using also \eqref{eq:weakestim} the above turns into
\begin{align*}
\int_{Q_{r}(z_0)}\abs{\tilde h-\tilde h(z_0)}& \lesssim r^2\sup_{Q_{7/8}}\abs{h-\bar h} \\
&\quad
+ \int_{Q_r(z_0)}\abs{\bar h(z) -(h(z_0)+\nabla \bar h(z_0)\cdot (z-z_0))}\, dz\\
&\lesssim r^2 \left[ \mu_+(Q_1)\right]^{1/8} + r^2 \int_0^1 \int_{Q_r}\abs{\nabla^2\bar h(z_0+s\tilde z)}\, d\tilde z ds.
\end{align*}
Plugging this back into \eqref{eq:tildeu4} we deduce that for $\abs{\xi}\leq r/4$ it holds
\begin{align*}
\int_{Q_{r/4}(z_0)}(u(\cdot+\xi)-u)^4 &\lesssim \int_{Q_{r/2}(z_0)}(u-c(z_0))^4\\
& \lesssim r\abs{\mu}(Q_{r}(z_0)) + r \left[ \mu_+(Q_1)\right]^{1/8}\\
&\quad
+ r \int_0^1 \int_{Q_r}\abs{\nabla^2\bar h(z_0+s\tilde z)}\, d\tilde z ds.
\end{align*}
Integrating over $z_0\in Q_{3/4}$, dividing by $r^2$, and recalling \eqref{eq:d2barh} we obtain \eqref{eq:compactquant}.

\textbf{Step 2.} Conclusion.

Recall that $\zeta$ is the entropy solution of \eqref{eq:burgers} given by $\zeta=\partial_x\bar h$. Next we use the compactness \eqref{eq:compactquant} to turn \eqref{eq:weakestim} into an $L^1$-estimate on
\begin{equation*}
u-\zeta=\partial_x(h-\bar h).
\end{equation*}
We introduce a smooth kernel $\varphi(z)$ with compact support in $Q_1$ and unit integral and define
\begin{equation*}
u_r=u\star \varphi_r,\qquad \zeta_r =\zeta\star\varphi_r,\qquad\text{where } \varphi_r(z)=r^{-2}\varphi(z/r).
\end{equation*}
From \eqref{eq:compactquant} we deduce that for $r\in (0,1/8)$ it holds
\begin{equation*}
\int_{Q_{3/4}}\abs{u-u_r}^4\lesssim \e(1+\abs{\mu}(Q_1))+\frac 1r \mu_+(Q_1)^{\frac 18}.
\end{equation*}
Moreover, \eqref{eq:d2barh} implies that
\begin{equation*}
\int_{Q_{3/4}}\abs{\zeta-\zeta_r}^4\lesssim r^4.
\end{equation*}
The combination yields
\begin{align*}
\int_{Q_{3/4}}\abs{u-\zeta}^4&
\lesssim r^4 + r(1+\abs{\mu}(Q_1))+\frac 1r  \mu_+(Q_1)^{\frac 18} 
+\int_{Q_{3/4}}\abs{(h-\bar h)\star\partial_x\varphi_r}^4\\
&\lesssim r^4 + r(1+\abs{\mu}(Q_1))+\frac 1r  \mu_+(Q_1)^{\frac 18}
+\frac {1}{r^4} \mu_+(Q_1)^{\frac 12},
\end{align*}
where we used \eqref{eq:weakestim} in the second step.
We choose $r=[\mu_+(Q_1)]^{1/16}(1+\abs{\mu}(Q_1))^{-1/5}$, which is admissible since without loss of generality $\mu+(Q_1)\ll 1$, to find our conclusion
\begin{equation*}
\int_{Q_{3/4}}\abs{u-\zeta}^4 \lesssim (1+\abs{\mu}(Q_1))^{4/5}\left[\mu_+(Q_1)\right]^{1/16}.
\end{equation*}
\end{proof}

\begin{proof}[Proof of Theorem~\ref{thm:lebquantit}.]
\textbf{Step 1.} If $u$ is as in Theorem~\ref{thm:errorentropy}, then for any $\theta\in (0,3/4)$  it holds
\begin{equation}\label{eq:pfth2st1}
\fint_{\abs{t}\leq \theta}\fint_{\abs{x}\leq \theta}\fint_{\abs{y}\leq\theta} \abs{u(t,x)-u(t,y)}\,dx dy dt\lesssim \theta +\frac{1}{\theta^3}\min(1,\abs{\mu}(Q_1)^{\frac{1}{64}}).
\end{equation}
To prove this estimate we apply Theorem~\ref{thm:errorentropy} to $u$ and to $-u(-t,x)$ and deduce the existence of an entropy solution $\overline\zeta$ and an anti-entropy solution $\underline\zeta$ of \eqref{eq:burgers} in $Q_1$ with
\begin{equation*}
\int_{Q_{3/4}}\abs{u-\overline\zeta}+\int_{Q_{3/4}}\abs{u-\underline\zeta}\lesssim (1+\abs{\mu}(Q_1))^{\frac 15}\abs{\mu}(Q_1)^{\frac{1}{64}}.
\end{equation*}
Since $\overline\zeta$ is an entropy solution and $\underline\zeta$ an anti-entropy solution it holds by Oleinik's principle
\begin{equation*}
\partial_x\overline\zeta\lesssim 1\quad\text{and}\quad -\partial_x\underline\zeta\lesssim 1\qquad\text{ in }Q_{3/4}.
\end{equation*}
Combining these facts we compute
\begin{align*}
\fint_{\abs{t}\leq \theta}\fint_{\abs{x}\leq \theta}\fint_{\abs{y}\leq\theta} \abs{u(t,x)-u(t,y)} & \lesssim \frac{1}{\theta^3}\int_{Q_{3/4}}\abs{u-\overline\zeta}+\frac{1}{\theta^3}\int_{Q_{3/4}}\abs{u-\underline\zeta} \\
&\quad + \fint_{\abs{t}\leq \theta}\frac{1}{\theta^2}\iint_{\theta>x>y>-\theta}(\overline\zeta(t,x)-\overline\zeta(t,y))_+  \\
&\quad + \fint_{\abs{t}\leq \theta}\frac{1}{\theta^2}\iint_{-\theta<x<y<\theta}(\underline\zeta(t,x)-\underline\zeta(t,y))_+\\
&\lesssim \frac{1}{\theta^3}(1+\abs{\mu}(Q_1))^{\frac 15}\abs{\mu}(Q_1)^{\frac{1}{64}} +\theta.
\end{align*}
This proves \eqref{eq:pfth2st1} if $\abs{\mu}(Q_1)\leq 1$, and for $\abs{\mu}(Q_1)\geq 1$ the left-hand side of \eqref{eq:pfth2st1} is $\lesssim 1$ so that \eqref{eq:pfth2st1} holds.

\textbf{Step 2.} If $u$ is as in Theorem~\ref{thm:lebquantit} and $z_0=(t_0,x_0)\in\Omega_{\alpha,K}$ then it holds
\begin{equation*}
D(r):=\fint_{\abs{t-t_0}\leq r}\fint_{\abs{x-x_0}\leq r}\fint_{\abs{y-x_0}\leq r} \abs{u(t,x)-u(t,y)}\,dx dy dt\lesssim C(K,d(z_0),\alpha) r^{\frac{\alpha}{\alpha+256}}
\end{equation*}
for all $r\in (0,d(z_0))$.

We assume without loss of generality $K\geq 1$.
For any $\rho\in (0,d(z_0)\wedge K^{-1/\alpha})$ we apply Step 1 to $z\mapsto u(\rho(z-z_0))$ and obtain
\begin{equation*}
D\left(\frac{3}{4}\theta\rho\right)\lesssim\theta +\frac{1}{\theta^3}K^{\frac{1}{64}}\rho^{\frac{\alpha}{64}}\qquad\text{for all }\theta\in (0,1).
\end{equation*}
We  choose $\theta=\left(K^{\frac {1}{64}}\rho^{\frac{\alpha}{64}}\right)^{\frac 14}$ to balance the two terms and set 
$r=\frac 34 \theta\rho=\frac 34 K^{\frac 1{256}}\rho^{\frac{256+\alpha}{256}}$. This yields
\begin{equation*}
D(r)\lesssim K^{\frac{1}{\alpha+256}}
r^{\frac{\alpha}{\alpha+256}}.
\end{equation*}
This is valid for $r\leq r_0(K,d(z_0))$ due to the constraints on $\rho$, and for $r\geq r_0$ we have $D(r)\lesssim (r/r_0)^{\frac{\alpha}{\alpha+256}}$.

\textbf{Step 3.} 
From Step 2 we have the desired regularity in the space variable, and it remains to use the equation \eqref{eq:burgers} to transfer it to the time variable. We sketch here the standard argument.

 We use coordinates in which $z_0=0$, we fix a smooth cut-off function $\eta(x)$, set $\eta_r(x)=r^{-1}\eta(r^{-1}x)$ and notice that
\begin{equation*}
\frac{d}{dt}\left[ \int u(t,x)\eta_r(x) dx\right]=
\frac 12 \int u^2(t,x)(\eta_r)'(x) dx.
\end{equation*}
In particular $t\mapsto \int u(t,x)\eta_r(x) dx$ is Lipschitz, and it holds
\begin{align*}
&\int (u(s,x)-u(t,x))\eta_r(x) dx  = \frac {s-t}2\int_0^1 \int u^2(\tau s +(1-\tau)t,x)(\eta_r)'(x)dx\, d\tau\\
&\quad =\frac{s-t}{2r} \int_0^1 \int\left(  u^2(\tau s +(1-\tau)t,x)-u^2(\tau s +(1-\tau)t,y)\right)(\eta')_r(x) dx \, d\tau,
\end{align*}
where we used the fact that $\eta'$ has zero average and where $y$ can be choosen arbitrarily. Hence together with $\abs{u}\lesssim 1$ we obtain from averaging over $\abs{t}\leq r$, $\abs{s}\leq r$ and $\abs{y}\leq r$,
\begin{equation*}
\fint_{\abs{s}\leq r}\fint_{\abs{t}\leq r}\abs{\int (u(s,x)-u(t,x))\eta_r(x)\, dx } \lesssim D(r).
\end{equation*}
The conclusion then follows from Step 2.
\end{proof}

\appendix

\section{Lipschitz estimate for the viscosity solution}\label{a:lipvisc}

Let $h\in W^{1,\infty}(Q)$ solve \eqref{eq:HJ} almost everywhere.
The viscosity solution $\bar h\in W^{1,\infty}(Q)$ of \eqref{eq:HJ} with $\bar h=h$ on $\partial_0 Q$ is given \cite[\S{11}]{lions} by the Hopf-Lax formula 
\begin{equation*}
\bar h(t,x)=\inf\left\lbrace 
h(s,y)+\frac{(x-y)^2}{2(t-s)}\colon (s,y)\in\partial_0 Q,\; s<t
\right\rbrace.
\end{equation*}
Note that for $(t,x)\in Q$ the infimum is attained.
Let
$L:=\norm{\partial_x h}_{L^\infty(Q)}$,
so that the initial data $h(0,\cdot)$ has Lipschitz constant $\leq L$ and the boundary data $h(\cdot,0)$ and $h(\cdot,1)$ have Lipschitz constants $\leq L^2/2$.

\begin{lem}\label{lem:lipvisc}
It holds $\abs{\partial_x\bar h}\leq L$ a.e.
\end{lem}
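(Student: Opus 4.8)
The plan is to use only the facts recalled in the preamble of this appendix: $\bar h\in W^{1,\infty}(Q)$ is Lipschitz on $\overline Q$, it is given by the Hopf–Lax formula, and for $(t,x)\in Q$ the infimum there is attained, at some $(s^*,y^*)$ with necessarily $s^*<t$ (otherwise the expression is $+\infty$). By Rademacher's theorem it then suffices to show $\abs{\partial_x\bar h(t,x)}\le L$ at every $(t,x)\in Q$ where $\bar h$ is differentiable. At such a point, pick a minimizer $(s^*,y^*)$; since $(s^*,y^*)$ is still an admissible point for all $(t',x')$ close to $(t,x)$ (we use $t>s^*$), the smooth function $\Phi(t',x'):=h(s^*,y^*)+(x'-y^*)^2/(2(t'-s^*))$ satisfies $\Phi\ge\bar h$ near $(t,x)$ with equality at $(t,x)$. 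Differentiating the nonnegative function $\Phi-\bar h$ at its minimum $(t,x)$ gives $\partial_x\bar h(t,x)=(x-y^*)/(t-s^*)=:v^*$ (and, incidentally, $\partial_t\bar h(t,x)=-(v^*)^2/2$). So everything reduces to the bound $\abs{v^*}\le L$, which will come from the optimality of $(s^*,y^*)$ together with the Lipschitz bounds on the boundary data recalled above.

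I would then split cases according to the location of $(s^*,y^*)$ on $\partial_0 Q$. If $s^*=0$ and $y^*\in(0,1)$, then $y^*$ is an interior local minimum of the sum of the $L$-Lipschitz function $h(0,\cdot)$ and the smooth function $y\mapsto(x-y)^2/(2t)$; comparing with $y^*\pm\e$ and using $\abs{h(0,\cdot)\text{ increments}}\le L\abs{\cdot}$ forces the derivative of the smooth part at $y^*$, namely $(x-y^*)/t=v^*$, to satisfy $\abs{v^*}\le L$. If instead $y^*\in\{0,1\}$ and $s^*\in(0,t)$, then $s^*$ is an interior local minimum of the sum of the $(L^2/2)$-Lipschitz function $s\mapsto h(s,y^*)$ and the smooth function $s\mapsto(x-y^*)^2/(2(t-s))$; the same one-sided comparison now gives $(x-y^*)^2/(2(t-s^*)^2)\le L^2/2$, i.e. $(v^*)^2\le L^2$. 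The only remaining possibility is that the infimum is attained at one of the two corners $(0,0)$ or $(0,1)$, where $v^*=x/t$ (resp. $(x-1)/t$) has a fixed sign; testing the Hopf–Lax infimum against boundary points $\tilde y\to0^+$ (resp. $\tilde y\to1^-$) and using again that $h(0,\cdot)$ is $L$-Lipschitz yields exactly the one-sided inequality $x/t\le L$ (resp. $(1-x)/t\le L$) that is needed. In every case $\abs{\partial_x\bar h(t,x)}\le L$, and as this holds at a.e. $(t,x)\in Q$ the lemma follows.

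I expect the difficulties to be purely of bookkeeping nature rather than conceptual: keeping track of where the infimum is attained (granted here), disposing of the corner configurations, and coping with the fact that the boundary data of $h$ is merely Lipschitz — the last point being handled uniformly by the one-sided difference-quotient argument above, which never uses differentiability of $h$ on $\partial_0 Q$. One could also try to prove the Lipschitz-in-$x$ bound directly by translating the minimizer $(s^*,y^*)$ horizontally; this works cleanly when $s^*=0$ but the translation can leave the admissible set near the lateral parts of $\partial_0 Q$, so routing the argument through Rademacher's theorem and the pointwise identity $\partial_x\bar h=(x-y^*)/(t-s^*)$ is the cleaner path.
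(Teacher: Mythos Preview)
Your proof is correct and follows essentially the same route as the paper. Both arguments reduce to showing that the slope $v^*=(x-y^*)/(t-s^*)$ associated with a minimizer satisfies $\abs{v^*}\le L$, and both obtain this by playing the optimality of $(s^*,y^*)$ against the Lipschitz bounds $L$ and $L^2/2$ on the bottom and lateral boundary data respectively. The only cosmetic difference is framing: the paper phrases the argument as an improvement step (any competitor with slope $>L$ can be replaced by one with strictly smaller slope and no larger value, hence some minimizer has slope $\le L$), whereas you phrase it as a first-order optimality condition at the minimizer and single out the two corners explicitly; the underlying computations are the same.
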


\begin{proof}
Let $(t_0,x_0)\in Q$ and denote by $(s_0,y_0)$ a point at which the infimum defining $\bar h(t_0,x_0)$ is attained. Then for any small $x$ it holds
\begin{align*}
\bar h(t_0,x_0+x)-\bar h(t_0,x_0) & =\bar h(t_0,x_0+x)-h(s_0,y_0)-\frac{(x_0-y_0)^2}{2(t_0-s_0)}\\
&\leq \frac{(x_0+x-y_0)^2}{2(t_0-s_0)} -\frac{(x_0-y_0)^2}{2(t_0-s_0)}\\
& = \frac{x_0-y_0}{t_0-s_0} x + \frac{1}{2(t_0-s_0)}x^2,
\end{align*}
so that $\abs{\partial_ x\bar h(t_0,x_0)}\leq \abs{x_0-y_0}/(t_0-s_0)$ and to prove \eqref{eq:lipbarh} it suffices to show that the infimum defining  $\bar h(t_0,x_0)$ is attained at some $(s_0,y_0)$ with
\begin{equation}\label{eq:s0y0cone}
\frac{\abs{x_0-y_0}}{t_0-s_0}\leq L.
\end{equation}
We show that for any $(s,y)\in\partial_0 Q\cap\lbrace s<t_0\rbrace$ with 
\begin{equation}\label{eq:failsycone}
\frac{\abs{x_0-y}}{t_0-s} > L,
\end{equation}
there exists $(\tilde s ,\tilde y)\in\partial_0 Q\cap\lbrace s<t_0\rbrace$ satisfying
\begin{align}
&\frac{\abs{x_0-\tilde y}}{t_0-\tilde s} < \frac{\abs{x_0-y}}{t_0-s}\label{eq:tildesycone}\\
\text{and}\quad &h(s_0,\tilde y)+\frac{(x_0-\tilde y)^2}{2(t_0-\tilde s)} \leq  h(s_0, y)+\frac{(x_0-y)^2}{2(t_0-s)},\label{eq:tildesyinf}
\end{align}
which proves \eqref{eq:s0y0cone}. 

There are two cases to consider, depending on which part of the parabolic boundary $(s,y)$ belongs to.

\textbf{Case 1 :} $(s,y)\in \lbrace 0 \rbrace\times [0,1]$. We look for $(\tilde s,\tilde y)$ defined through $\tilde s=0$ and
\begin{equation*}
\frac{x_0-\tilde y}{t_0}=(1-\e)D,\quad D:=\frac{x_0-y}{t_0}, 
\end{equation*}
for some small $\epsilon>0$,
so that \eqref{eq:tildesycone} is satisfied. On the other hand since $h(0,\cdot)$ has Lipschitz constant $\leq L$, to show \eqref{eq:tildesyinf} it suffices to establish
\begin{equation*}
L\abs{\tilde y-y}  \leq \frac{(x_0-y)^2-(x_0-\tilde y)^2}{2t_0}
\quad\Longleftrightarrow\quad \frac L D \leq (1- \frac 12 \e),
\end{equation*}
which is satisfied for small enough $\e$ since \eqref{eq:failsycone} amounts to $\abs{D}>L$.

\textbf{Case 2 :} $(s,y)\in (0,1)\times \lbrace 0,1 \rbrace$. We assume $y=0$, the case $y=1$ being similar. We look for $(\tilde s,\tilde y)$ defined through $\tilde y=0$ and
\begin{equation*}
\frac{x_0}{t_0-\tilde s}=(1-\e) D,\quad D:=\frac{x_0}{t_0-s},
\end{equation*}
for some small $\epsilon>0$, so that \eqref{eq:tildesycone} is satisfied. On the other hand since $h(1,\cdot)$ has Lipschitz constant $\leq L^2/2$, to show \eqref{eq:tildesyinf} it suffices to establish
\begin{equation*}
\frac{L^2}{2}\abs{s-\tilde s}\leq \frac{x_0^2}{2}\left(\frac{1}{t_0-s}-\frac{1}{t_0-\tilde s}\right)\quad\Longleftrightarrow\quad \frac{L^2}{D^2}\leq  1-\e,
\end{equation*}
which is satisfied for small enough $\e$ since \eqref{eq:failsycone} amounts to $\abs{D}>L$.
\end{proof}

\bibliographystyle{plain}
\bibliography{ref}

\end{document}